\documentclass[a4paper]{article}

\newcommand{\COLORON}{1}
\newcommand{\NOTESON}{0}
\newcommand{\Debug}{0} 

\usepackage[usenames]{color} 
\usepackage{amsthm,amssymb,amsmath,bbm,enumerate,graphicx,epsf,stmaryrd,accents,ulem}
\usepackage[bookmarks, colorlinks=false, breaklinks=true]{hyperref} 

\usepackage{authblk}

\newcommand{\comment}[1]{}
\newcommand{\COMMENT}[1]{}

\definecolor{darkgray}{rgb}{0.3,0.3,0.3}
\newcommand{\defi}[1]{{\color{darkgray}\emph{#1}}}

\newcommand{\acknowledgement}{\section*{Acknowledgement}}

\comment{
	\begin{lemma}\label{}	
\end{lemma}
\begin{proof} 

\end{proof}

\begin{theorem}\label{}
\end{theorem} 
\begin{proof} 	

\end{proof}

}

\newtheorem{proposition}{Proposition}[section]
\newtheorem{definition}[proposition]{Definition}
\newtheorem{theorem}[proposition]{Theorem}
\newtheorem{corollary}[proposition]{Corollary}

\newtheorem{lemma}[proposition]{Lemma}
\newtheorem{observation}[proposition]{Observation}
\newtheorem{conjecture}{{Conjecture}}[section]

\newtheorem{problem}[conjecture]{{Problem}}

\theoremstyle{definition}
\newtheorem{remark}{Remark}[subsection]

\theoremstyle{definition}
\newtheorem{examp}[proposition]{Example}

\newcommand{\FIG}{0}

\ifnum \NOTESON = 1 \newcommand{\note}[1]{ 

\hspace*{-30pt}
	{\color{blue}  NOTE: \color{Turquoise}{\small  \tt \begin{minipage}[c]{1.1\textwidth}  #1 \end{minipage} \ignorespacesafterend }} 
	
	}
\else \newcommand{\note}[1]{} \fi

\newcommand{\afsubm}[1]{ \ifnum \Debug = 1 {\mymargin{#1}}
\fi} 

\ifnum \Debug = 1 
\else  \fi

\ifnum \FIG = 1 \newcommand{\fig}[1]{Figure ``{#1}''}
\else \newcommand{\fig}[1]{Figure~\ref{#1}} \fi

\ifnum \FIG = 1 
\else  \fi

\ifnum \Debug = 1 \usepackage[notref,notcite]{showkeys}
\fi

\ifnum \COLORON = 0 \renewcommand{\color}[1]{}
\fi

\newcommand{\pdfFig}[3]{
   \begin{figure}[htbp]
   \centering
   \noindent
   \includegraphics[width=#3\linewidth]{#1.pdf}
   \caption{\small #2}
   \label{#1}
   \end{figure}
}

\newcommand{\N}{\ensuremath{\mathbb N}}
\newcommand{\R}{\ensuremath{\mathbb R}}

\newcommand{\Z}{\ensuremath{\mathbb Z}}

\newcommand{\BS}{\ensuremath{\mathbb S}}

\newcommand{\cc}{\ensuremath{\mathcal C}}

\newcommand{\cf}{\ensuremath{\mathcal F}}

\newcommand{\cs}{\ensuremath{\mathcal S}}
\newcommand{\cgr}{\ensuremath{\mathcal R}}
\newcommand{\ct}{\ensuremath{\mathcal T}}
\newcommand{\cu}{\ensuremath{\mathcal U}}
\newcommand{\cv}{\ensuremath{\mathcal V}}

\newcommand{\sm}{\backslash}

\makeatletter
\DeclareRobustCommand{\cev}[1]{%
  \mathpalette\do@cev{#1}%
}
\newcommand{\do@cev}[2]{%
  \fix@cev{#1}{+}%
  \reflectbox{$\m@th#1\vec{\reflectbox{$\fix@cev{#1}{-}\m@th#1#2\fix@cev{#1}{+}$}}$}%
  \fix@cev{#1}{-}%
}
\newcommand{\fix@cev}[2]{%
  \ifx#1\displaystyle
    \mkern#23mu
  \else
    \ifx#1\textstyle
      \mkern#23mu
    \else
      \ifx#1\scriptstyle
        \mkern#22mu
      \else
        \mkern#22mu
      \fi
    \fi
  \fi
}

\makeatother

\newcommand{\nin}{\ensuremath{{n\in\N}}}

\newcommand{\pth}[2]{\ensuremath{#1}\text{--}\ensuremath{#2}~path}

\newcommand{\seq}[1]{\ensuremath{(#1_n)_{n\in\N}}} 
 
\newcommand{\g}{\ensuremath{G\ }}
\newcommand{\G}{\ensuremath{G}}

\newcommand{\ceil}[1]{\ensuremath{\left\lceil #1 \right\rceil}}
\newcommand{\floor}[1]{\ensuremath{\left\lfloor #1 \right\rfloor}}

\newcommand{\gve}{\ensuremath{G=(V,E)}}

\newcommand{\Cg}{Cayley graph}

\newcommand{\Lr}[1]{Lemma~\ref{#1}}

\newcommand{\Tr}[1]{Theorem~\ref{#1}}

\newcommand{\Sr}[1]{Section~\ref{#1}}

\newcommand{\Prr}[1]{Pro\-position~\ref{#1}}
\newcommand{\Prb}[1]{Problem~\ref{#1}}
\newcommand{\Cr}[1]{Corollary~\ref{#1}}
\newcommand{\Cnr}[1]{Con\-jecture~\ref{#1}}
\newcommand{\Or}[1]{Observation~\ref{#1}}
\newcommand{\Er}[1]{Example~\ref{#1}}
\newcommand{\Dr}[1]{De\-fi\-nition~\ref{#1}}

\newcommand{\impl}[2]{\ref{#1} $\to$ \ref{#2}}

\newcommand{\lf}{locally finite}

\renewcommand{\iff}{if and only if}
\newcommand{\fe}{for every}
\newcommand{\Fe}{For every}

\newcommand{\st}{such that}

\newcommand{\ti}{there is}
\newcommand{\ta}{there are}

\newcommand{\wrt}{with respect to}

\newcommand{\Tfae}{The following are equivalent}

\newcommand{\labtequ}[2]{
 \begin{equation} \label{#1} 	\begin{minipage}[c]{0.9\textwidth}  #2 \end{minipage} \ignorespacesafterend \end{equation} }

\newcommand{\mymargin}[1]{
 \ifnum \Debug = 1
  \marginpar{%
    \begin{minipage}{\marginparwidth}\small%
      \begin{flushleft}%
        {\color{blue}#1}%
      \end{flushleft}%
   \end{minipage}%
  }%
 \fi
}%

\newcommand{\extras}[1]{
 \ifnum \Debug = 1
\section{Extras} #1
 \fi
}%

\newcommand{\mySection}[2]{}

\renewcommand{\defi}[1]{{\color{darkgray}\emph{#1}}}

\usepackage{enumerate,enumitem}

\usepackage[numbers,sort&compress]{natbib}
\usepackage{authblk}
\usepackage[percent]{overpic}
\usepackage{hyperref}

\usepackage{tikz}
\usepackage{geometry}
\usetikzlibrary{arrows.meta, positioning, shapes.geometric}
\usetikzlibrary{positioning, calc, shapes.misc, fit}
\usepackage[nameinlink]{cleveref}

\newcommand{\asly}{asymptotically} 
\newcommand{\asc}{asymptotic} 

\newcommand{\asham}{\asly\ hamiltonian}
\newcommand{\astf}{\asc\ spanning tree family}
\newcommand{\aST}{\asc\ spanning tree}
\newcommand{\infly}{infinitesimally}

\newcommand{\kcon}{$k$-connected}
\newcommand{\gseq}{\ensuremath{(G_n)}}

\newcommand{\sexp}[1]{$#1$-expander}
\newcommand{\sCe}{\sexp{C}}

\newcommand{\diam}{\mathrm{diam}}

\newcommand{\qy}{quasi-isometry}
\newcommand{\qic}{quasi-isometric}
\newcommand{\maqy}{$(M,A)$-quasi-isometry}
\newcommand{\maqic}{$(M,A)$-quasi-isometric}

\newcommand{\hamfam}{hamiltonian family}

\begin{document}

\title{Asymptotic connectivity, hamiltonicity and toughness}

\author{Agelos Georgakopoulos\thanks{Supported by EPSRC grant  EP/V009044/1. I thank the department of mathematics of the NKUA for hospitality
while some of this work was carried out.}}

\affil{ {Mathematics Institute}\\ {University of Warwick}\\  {CV4 7AL, UK}}

\date{\today}
\maketitle

\begin{abstract}
We define coarse variants of the notions of connectivity, hamiltonicity, toughness, and spanning trees. Our notions are invariant under quasi-isometries, and they obey the same relationships between them as their classical analogs. We pose several open problems that are coarse versions of classical results and conjectures  about hamiltonicity.
\end{abstract}

{\bf{Keywords:}} coarse graph theory, quasi-isometry, connectivity, coarse separation, Hamilton cycle, toughness, cycle space.

{\bf{MSC 2020 Classification:}} 
05C40, 05C45, 05C48, 05C12, 05C10.

\maketitle

\normalem

\section{Introduction}

There is widespread current interest in studying metric variants of graph properties that are stable under local perturbations and capture the large-scale geometric structure. This parallels ideas in  Geometric Group Theory, much of which is concerned with properties of groups that are invariant under quasi-isometries (of their \Cg s). 

This motivated a quest, urged explicitly by Papasoglu and the author \cite{GeoPapMin}, to seek metric variants of the classical graph-theoretic results and concepts, whereby invariance under quasi-isometry is the key benchmark of a successful variant. See e.g.\ \cite{ADGSmallCounterexamples,AlbGeoBlo,AJKW,BerSeyBou,BlPiPrCoa,BBEGLPS,BLPPSep,ChaFou,DGHLMGallai,FujPapCoa,FujPapAsy,GeoMolQua,HatPilChubby,LiuWeakCoarseMenger,McMFat,NgScSeAsyI,NgScSeAsyII} for a sample of the fast-growing literature in this direction commonly called `Coarse Graph Theory'.

Despite the growing interest, hardly any of the classical concepts of graph theory have so far found coarse variants. This paper partly fills this gap, by providing `asymptotic' variants of the notions of connectivity, hamiltonicity, toughness, and spanning trees. Our asymptotic variants obey the same numerical relationships between them as their classical analogs: asymptotic connectivity is at least twice the asymptotic toughness, and asymptotic hamiltonicity implies that the asymptotic connectivity is at least 2, and the asymptotic toughness at least 1. More importantly, these concepts are all invariant under quasi-isometries. As we will see, the best known results and open problems about hamiltonicity have interesting asymptotic variants.

\subsection{Connectedness} \label{coa con}

Let $G=(V,E)$ be a connected graph, and let $d_G$ denote its graph-distance. The \defi{diameter $\diam_G(U)$} of $U\subseteq V$ is $\sup_{u,v\in U} d_G(u,v)$. We say that a vertex set (usually a separator) $S \subseteq V$ is \defi{$(n,d)$-bounded}, if $S$ is the union of at most $n$ sets each of diameter at most~$d$. We say that \g is \defi{$(k,d,D)$-inseparable}, if for every $(k-1,d)$-bounded set $S \subseteq V$, all but at most one components of $G-S$ lie inside the ball $B_D(S):=\{v \in V \mid d_G(v,S)\leq D\}$.\footnote{Equivalently, at most one component  of $G-S$ has diameter greater than $D'$; see \Lr{lem kdb}.} 


\begin{definition} \label{def asly kcon}
A sequence \seq{G} of  graphs is \defi{\asly\ \kcon}, if \fe\ $d\in \N$ \ti\ $D \in \N$ \st\ $G_n$ is  $(k,d,D)$-inseparable \fe\ $n$.
\end{definition} 


This definition can be applied with \gseq\ replaced by a single, infinite, graph \G\ (by adapting the definition, or by just setting $G_n=G$ \fe\ $n$).\footnote{There is a related notion of \defi{coarse separation}, see e.g.\ \cite{PapQI,BGTCoarse,PatilFinitary}. In that terminology, \g is \asly\ \kcon\
\iff\ it \ti\ no $d$ \st\ $G$ is coarsely separated by a family of $(k-1,d)$-bounded separators.} For $k=2$  similar definitions were used in \cite{AlbGeoBlo}.

\smallskip
The \defi{\asc\ connectivity $\kappa_\infty((G_n))$} of \gseq\ is defined as
$$\kappa_\infty((G_n)):=\sup\{k \mid \g \text{ is \asly\ \kcon}\}.$$

We say that two sequences of graphs \seq{G}, \seq{H} are \defi{$(M,A)$-quasi-isometric}, if each member of one sequence is $(M,A)$-quasi-isometric to a member of the other (as defined in \Sr{sec QI}). We will prove, in \Sr{sec QII}, that $\kappa_\infty$ is invariant under this relation: 

\begin{proposition} \label{aslykcon}
Being \asly\ \kcon\ is invariant under quasi-isometry  \fe\ $k\in \N$. (Hence so is $\kappa_\infty$.)
\end{proposition}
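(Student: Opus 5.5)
Let $f\colon H\to G$ be an $(M,A)$-quasi-isometry, with $G$ (a member of \seq{G}) asymptotically \kcon, and choose a quasi-inverse $g\colon G\to H$ with constants depending only on $(M,A)$. The goal is to show $H$ is $(k,d,D')$-inseparable, where $D'$ depends only on $d$, $M$, $A$ and the function $d\mapsto D$ witnessing asymptotic $k$-connectivity of \seq{G}. The plan is to push a separator of $H$ forward to $G$, thicken it to absorb the coarseness of $f$, and use inseparability of $G$ to conclude that at most one of the pieces it separates is large. Because the constants are uniform, the statement transfers from sequences to sequences, and hence $\kappa_\infty$ is invariant.

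\textbf{Step 1 (image of a bounded separator).} Let $S\subseteq V(H)$ be $(k-1,d)$-bounded, say $S=S_1\cup\dots\cup S_{k-1}$ with $\diam_H(S_i)\le d$. Put $T_i:=B_r(f(S_i))$ in $G$ and $T:=\bigcup_i T_i$, with $r$ to be chosen depending on $M,A$. Each $T_i$ has diameter at most $Md+A+2r=:d'$, so $T$ is $(k-1,d')$-bounded. Let $D$ be the constant that inseparability of $G$ provides for $d'$.

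\textbf{Step 2 (connected sets avoiding $S$ map to connected sets avoiding $T$).} Let $u,v$ be adjacent vertices of $H-S$ that are far from $S$, say $d_H(u,S),d_H(v,S)>R$. Then $d_G(f(u),f(v))\le M+A$, so we can join $f(u)$ to $f(v)$ by a geodesic of length at most $M+A$. This geodesic avoids $T$ provided $f(u)$ is farther than $r+M+A$ from $f(S)$, and this holds once $R$ is large relative to $r$, $M$, $A$, because $f$ satisfies the lower quasi-isometry bound. Consequently, if $C_1,C_2$ are two components of $H-S$ each containing a vertex $x_j$ at distance greater than some large $D'$ from $S$, then the sets $X_j:=\{x\in C_j : d_H(x,S)>R\}$ are coarsely connected and have images in $G-T$. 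We choose $D'\ge R+M(D+r)+$const. The image of a path in $X_j$ stays in one component of $G-T$, and $f(x_j)$ lies outside $B_D(T)$.

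\textbf{Step 3 (the two images lie in distinct components; main obstacle).} Suppose $f(x_1)$ and $f(x_2)$ were joined by a path $P$ in $G-T$. Map $P$ back via $g$ and interpolate consecutive images by geodesics of length at most $M+A$ to obtain a path $Q$ in $H$ from near $x_1$ to near $x_2$. Since $P$ avoids $B_r(f(S))$, each vertex of $P$ is at distance greater than $r$ from $f(S)$. Hence its $g$-image is at distance at least roughly $r/M-A$ from $g(f(S))$, which lies within $A$ of $S$. Choosing $r$ large enough (depending only on $M$ and $A$) makes $Q$ avoid $S$, so $Q$ connects $C_1$ and $C_2$, a contradiction. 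Therefore at least two components of $G-T$ leave $B_D(T)$, contradicting $(k,d',D)$-inseparability. The delicate point is the bookkeeping that makes $Q$ avoid $S$ while the endpoints of $Q$ stay in $C_1$ and $C_2$. Here the endpoint $g(f(x_j))$ is within $A$ of $x_j$, and the short connecting geodesic avoids $S$ because $x_j$ is far from $S$. Alternatively, one could invoke the equivalent diameter formulation from the footnote (\Lr{lem kdb}) to avoid tracking distances to $S$.

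Finally, symmetry of the quasi-isometry relation gives the converse direction. Since all constants depend only on $(M,A)$ and $d$, the argument applies uniformly to all $n$, which proves the statement for sequences, and the invariance of $\kappa_\infty$ follows immediately.
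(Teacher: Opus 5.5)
Your proof is correct and is essentially the paper's argument, run directly rather than by contrapositive. You push a $(k-1,d)$-bounded separator through the quasi-isometry, thicken it by a ball of radius depending only on $(M,A)$, and check that the thickened image still separates the far images by pulling paths back; this is exactly \Lr{lem fS D}, with a quasi-inverse in place of quasi-surjectivity. Step~2 is superfluous and can be dropped, including its unproved claim that the sets $X_j$ are coarsely connected, since Step~3 together with $d_G(f(x_j),T)\geq M^{-1}d_H(x_j,S)-A-r$ already finishes the proof.
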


Let us consider some examples. Letting $C_n$ denote the $n$-vertex cycle, it is straightforward to check that $\kappa_\infty((C_n))=2$. If \gseq\ is $(M,A)$-quasi-isometric to a family of trees of unbounded diameters, then it easily follows from Manning's bottleneck property \cite{Manning,GeoPapMin} that $\kappa_\infty((C_n))=1$. Similarly, it is easy to see that  every infinite, locally finite, \asly\ 2-connected graph has at most 1 end (\Prr{onended}). 

\begin{examp} \label{ex Zd}
Let $G_n$ denote the $n \times n$ planar square grid (or its toroidal variant). Then $\kappa_\infty((G_n))=\infty$, and similarly, letting $\Z^d, d\geq 2$ denote the $d$-dimensional cubic lattice, we have $\kappa_\infty(\Z^d)=\infty$. This is not hard to see directly, but it will also follow from \Prr{kinfty} below.\footnote{Consequently, high asymptotic connectivity does not force large asymptotic minors in the sense of \cite{GeoPapMin}.} 
\end{examp}


Using this example as a building block, we can construct sequences \seq{G}\ with $\kappa_\infty((G))=k$ \fe\ $k\in \N$:

\begin{examp} \label{ex kcon}
Fix a finite graph \g with edge-connectivity $\lambda(G)=k$. We will obtain a sequence \seq{G}\ by `blowing up' each vertex of \g into a portion of a grid, the size of which grows with $n$, so that $\kappa_\infty((G))=k$. 

For this, let $O_n$ be the ball of radius $n$ around the origin of the infinite triangular grid. 
\Fe\  every vertex $v$ of \G, let $O^v_n$ be a copy of $O_n$. Let $m:=d(v)$ be the degree of $v$, and pick vertices $w^v_1, \ldots, w^v_{m}$ on the `boundary' of $O^v_n$ that are pairwise at distance at least $\floor{\frac{n}{10 m}}$. 
Fix a bijection $\pi: E(v) \to \{1,\ldots,d(v)\}$ for each $v\in V(G)$. For every edge $uv\in E(G)$, identify $w^v_{\pi(e)}$ with  $w^u_{\pi(e)}$, and let $G_n$ be the resulting graph. 

It is easy to see that each cut $\{u_1 v_1, \ldots u_k v_k\}$ of \g consisting of $k$ edges gives rise to a $(k,0)$-bounded separator $S$ of $G_n$, consisting of the `attachment points' of the form $w^{u_i}_{\pi(u_i v_i)}$, separating vertices that are $n$-far from $S$, namely the centers of some $O^v_n$'s. Thus $\kappa_\infty((G_n))\leq k$. On the other hand, it is easy to see that $\kappa_\infty((O_n))=\infty$, and it follows that  $\kappa_\infty((G_n))\geq k$.
\end{examp}

\subsection{Hamiltonicity} \label{coa ham}

Let $G=(V,E)$ be a graph, and $C \subseteq G$ a cycle (or an arbitrary subset of $V$). We say that $C$ is \defi{$R$-dense} in \G, if $d_G(x,C)\leq R$ holds \fe\ $v\in V$. 
This notion generalises to closed curves in metric spaces, and offers naive ways to try to extend hamiltonicity to such spaces. For example, we can call a family \cf\ of metric spaces \defi{weakly hamiltonian}, if there is $R\in \R$ \st\ each $F \in \cf$ admits an $R$-dense simple closed curve. A serious drawback of this notion from the coarse-graph-theoretic perspective however, is that it is not invariant under quasi-isometries (see \Er{}). But we will find a better notion which is. A key insight is that a Hamilton cycle in a graph \g is a closed curve which is bijective when restricted to $V(G)$, and while $R$-denseness is a coarse analogue of \emph{surjectivity}, we also need a coarse analogue of \emph{injectivity}. Let me define this next.

Let $W=x_0 x_1 \ldots x_k=x_0$ be a cycle, or more generally a closed walk in a graph \G.  We say that $W$ is \defi{$(d,D)$-injective}, if for every $i,j$ with $d_G(x_i,x_j)\leq d$,  
at least one of the two subwalks $W'$ of $W$ between  $x_i,x_j$ satisfies $\diam_G(W')\leq D$. 

\begin{definition} \label{def HC}
A sequence \seq{G} of finite graphs is  \defi{\asly\ hamiltonian}, if \fe\ $d \in \N$ there are $D,R \in \N$, and a family of cycles $C_n=C_n^d\subset G_n$ \st\ \fe\ $n$, 
\begin{enumerate}
\item \label{hc i} $C_n$ is $R$-dense in $G_n$, and
\item  \label{hc ii} $C_n$ is (d,D)-injective.
\end{enumerate}
\end{definition}

We remark that for each fixed $n$, only finitely many values of $d$ matter, namely those with $d< \diam(G_n)$; we can fix $C_n^d$ for larger $d$. But for each $d$, all but finitely many $G_n$s matter. We call a family of cycles $(C_n^d), d,n\in \N$ as above a \defi{\hamfam} of \gseq. We can think of $D,R$ as functions of $d$, called the \defi{control functions} of the \hamfam.

%
%

The following is a straightforward consequence of the definitions:
\begin{observation} \label{obs ham con}
If \gseq\ is \asham, then it is \asly\ 2-connected.
\end{observation}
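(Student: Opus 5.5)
Fix $d\in\N$. We need a single $D'$ such that every $G_n$ is $(2,d,D')$-inseparable: for every $S\subseteq V(G_n)$ of diameter at most $d$, all but at most one component of $G_n-S$ lie in $B_{D'}(S)$. The plan is to apply the hamiltonian family with a parameter $d'$ slightly larger than $d$, obtaining $D,R$ and cycles $C_n$. We then argue that $S$ can only cut $C_n$ into one long arc plus a short remainder, and that everything far from $S$ hangs off the long arc.

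The first step handles the case $S\cap C_n=\emptyset$. Then $C_n$ lies in one component $K$ of $G_n-S$. Let $v$ lie in another component $K'$, and pick $x\in C_n$ with $d(v,x)\le R$. A geodesic from $v$ to $x$ must meet $S$, so $d(v,S)\le R$. Hence $K'\subseteq B_R(S)$, which is what we want.

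The second step, the main case, is $S\cap C_n\neq\emptyset$. Any two vertices $x_i,x_j\in S\cap C_n$ satisfy $d(x_i,x_j)\le d$. So by $(d,D)$-injectivity one of the two arcs of $C_n$ between them has diameter at most $D$. Fix $s\in S\cap C_n$. Every other $t\in S\cap C_n$ is joined to $s$ by a short arc of diameter at most $D$.

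Let $A$ be the union of $s$ with all these short arcs. The natural guess is that $A$ is an arc of $C_n$ with $\diam(A)\le 2D$, since all of it stays within distance $D$ of $s$.

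There is one caveat. If every point of $C_n$ is within $D$ of $s$, the short arcs could cover all of $C_n$. In that case $\diam(C_n)\le 2D$, so all of $G_n$ lies within $2D+R$ of $s$, and the claim holds trivially with $D'\ge 2D+R$.

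Otherwise some $y\in C_n$ has $d(y,s)>D$. Then $y\notin A$, and each short arc lies on one side of $y$. The arcs through $s$ on each side are nested, so $A$ is an arc, and $C_n\setminus A$ is a subpath $P$ avoiding $S$. So $P$ lies in a single component $K$. Every vertex of $C_n$ outside $P$ lies in $A\subseteq B_D(s)$.

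Now take a vertex $v$ in a component $K'\neq K$, and choose $x\in C_n$ with $d(v,x)\le R$. If $x\in A$, then $d(v,s)\le R+D$. If $x\in P$, then a geodesic from $v$ to $x$ crosses $S$, so $d(v,S)\le R$. Either way $K'\subseteq B_{R+D}(S)$. Taking $D':=2D+R$ finishes the proof, and in fact $d'=d$ suffices.

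The step needing the most care is the arc-structure argument in the second step. One must show that the short arcs from $s$ combine into a single arc whose complement avoids $S$, which needs the existence of the far point $y$ to fix orientation. The degenerate case where $C_n$ itself is small must also be dealt with separately, as above. Everything else is routine use of $R$-density and the fact that paths between components pass through $S$.
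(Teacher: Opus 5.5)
Your proof is correct; the degenerate case needs only $D'=D+R$, not $2D+R$, but that is harmless. The paper gives no proof of this observation and calls it a straightforward consequence of the definitions.

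The natural argument in the paper is the proof of Proposition~\ref{tough 1} with $k=1$. Asymptotic $1$-toughness for a single set $S_1$ of diameter at most $d$ is literally $(2,d,D)$-inseparability. That argument goes by contradiction:
\begin{itemize}
\item Suppose two components of $G_n-S$ contain vertices farther than $R+D$ from $S$. By $R$-density, move them to vertices $x'_1,x'_2$ of $C_n$; these stay in their components and remain farther than $D$ from $S$.
\item Then both arcs of $C_n$ between $x'_1$ and $x'_2$ meet $S$, say at $a$ and $b$, with $d(a,b)\le d$.
\item Each of the two arcs of $C_n$ between $a$ and $b$ contains one of the $x'_j$, so it has diameter $>D$. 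This contradicts $(d,D)$-injectivity.
\end{itemize}
This route avoids your nesting/orientation step and the separate treatment of the case where all of $C_n$ lies within $D$ of $s$.

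Your direct route gives a slightly more informative structural picture. All of $S\cap C_n$ is confined to a single arc $A\subseteq B_D(s)$, and the complementary arc of $C_n$ lies in one component of $G_n-S$, which is therefore the only component that can leave $B_{R+D}(S)$. Both routes give the same control function $D'=R+D$.
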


Similarly to \Prr{aslykcon}, we will prove that \asc\ hamiltonicity is also invariant:

\begin{theorem} \label{asham qi}
Let \seq{G}, \seq{H} be quasi-isometric sequences of graphs. Then \seq{H} is \asly\ hamiltonian \iff\  \seq{G} is. 
\end{theorem}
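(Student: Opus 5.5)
By symmetry of the quasi-isometry relation, it suffices to assume that $(G_n)$ is asymptotically hamiltonian and deduce the same for $(H_n)$. Fix $(M,A)$-quasi-isometries $f_n\colon V(G_n)\to V(H_n)$ with quasi-inverses $g_n$, so that $d_{H}(f_n g_n(y),y)\le A'$ for some constant $A'$ depending only on $M,A$. I will use the standard facts from \Sr{sec QI}: $f_n$ is coarsely Lipschitz and coarsely injective, with constants independent of $n$.

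Given $d\in\N$, I first choose $d'$ large compared with $d$ (roughly $d'=M d+ C$ for a suitable constant $C$). Then I take the hamiltonian family cycle $C_n=C_n^{d'}=x_0x_1\dots x_k$ in $G_n$, which is $R$-dense and $(d',D)$-injective. Its image $f_n(x_0),f_n(x_1),\dots$ consists of consecutive points at distance at most $M+A$ in $H_n$. Joining consecutive images by geodesics gives a closed walk $W_n$ in $H_n$. This walk is $(MR+A+A')$-dense, since every $y$ is close to $f_n g_n(y)$ and $g_n(y)$ is $R$-close to $C_n$. Coarse injectivity transfers as follows. If two points of $W_n$ are at distance at most $d$, they lie within $M+A$ of points $f_n(x_i),f_n(x_j)$ at distance at most $d+2(M+A)$. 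Then $d_G(x_i,x_j)\le d'$, so one arc of $C_n$ between $x_i$ and $x_j$ has diameter at most $D$. Its image arc in $W_n$ then has diameter at most $MD+A+2(M+A)$. So $W_n$ is a closed walk with all the required properties, with control functions depending only on $d$, $M$, $A$ and the control functions of $(G_n)$.

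The main obstacle is that $W_n$ is only a closed walk, while \Dr{def HC} demands a cycle. I therefore need a lemma that converts a $(d,D)$-injective, $R$-dense closed walk into a cycle that is $(d,D')$-injective and $R'$-dense, with $D'$ and $R'$ depending only on $d,D,R$ (and possibly the injectivity parameter of the walk at a slightly larger scale). My first attempt is to shortcut $W$ repeatedly. Whenever a vertex repeats, $W=\dots v P v\dots$, injectivity at scale $0$ shows that one of the two subwalks between the occurrences has diameter at most $D$. I delete that small subwalk, and I do so choosing the two occurrences so that the deleted part is the one of small diameter. Because each deletion removes only a piece of diameter at most $D$, every deleted vertex stays within $D$ of the surviving walk. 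This needs care, since iterated deletions might nest, but nested deletions remain inside a single small piece. Density thus degrades only to $R+D$.

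Injectivity of the resulting cycle $C$ should follow because $C$ is obtained from $W$ by excising small loops. For two points of $C$ at distance at most $d$, their arcs in $C$ are subsets of the corresponding subwalks of $W$, so one of them has diameter at most $D$. The delicate point is that the ``other'' arc of $W$ may be the one that got shortened. I would handle this by noting that excised loops have diameter at most $D$, so an arc of $C$ is contained in the $D$-neighbourhood-closure of the corresponding arc of $W$; this gives at most $2D$. I expect the technical heart to be exactly this bookkeeping, namely that shortcutting preserves the arc structure up to bounded error. I also need to exclude the degenerate case where the cycle collapses to fewer than three vertices. For large $H_n$ this is impossible by density; for bounded-diameter $H_n$ any short cycle, or a single vertex treated as in the remark after \Dr{def HC}, suffices.
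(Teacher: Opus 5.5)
Your proposal is correct and follows essentially the paper's route: you push the cycle forward under the quasi-isometry, fill in consecutive images by short geodesics to get a dense, coarsely injective closed walk (the paper's \Lr{lem walks} plus its filling step), and then extract a cycle by excising small loops, which is the paper's \Lr{lem walk cycle}; the paper organizes that step via a maximal good family of removable closed subwalks rather than iterative greedy deletion. For the nesting issue you flag, measure each excised loop's diameter in the original walk $W$, as the paper does, so that each newly excised piece contains all earlier nested ones and density degrades only by a bound depending on $D$. Your extra $2D$ bookkeeping for injectivity is unnecessary, since, as you yourself note, each arc of the final cycle is a subset of the corresponding subwalk of $W$.
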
 

Here and throughout the paper, we allow a single vertex as a \defi{trivial cycle}, in order to retain the validity of  \Tr{asham qi} in the situation where one of \seq{G}, \seq{H} has a subsequence of bounded diameters. 

It is natural to consider variants of \Dr{def HC} where $R$ is a constant or $C_n^d$ is independent of $d$ \fe\ $n$, but we will prove that these conditions imply that \gseq\ is quasi-isometric to a sequence of cycles (\Tr{thm quacy}). But \Dr{def HC} allows for a much wider variety of \asly\ hamiltonian sequences: 

\begin{examp}  \label{ex Hilbert}
Let $G_n$ be the $n \times n$ square grid, and $X_n$ the euclidean square of side length $n$. Then both $(G_n), (X_n)$ are \asly\ hamiltonian\footnote{We have only formally defined \asc\  hamiltonicity for graphs, but extending it to metric spaces is straightfoward: the notion of $R$-denseness applies verbatim, while the notion of $(d,D)$-injectivity can be extended by replacing `closed walk' by `closed curve'.}. To see this, let $H_k$ denote the $k$th iteration of the construction of the Hilbert curve. A \hamfam\ $(C_n^d)$ for $(X_n)$ can be obtained by scaling  $H_{n-m+1}$ by a factor of $n$, where $m:= \min\{d,n\}$, and its {control functions} $D,R$ are linear in $d$. By an appropriate approximation we obtain a \hamfam\ of \gseq. 
\end{examp}

\subsection{Toughness}

Recall that a graph \G\ is called $t$-tough for some $t \in \R_{>0}$, if for every separator $S\subset V(G)$,  $G-S$ has at most $|S|/t$ components. The following is a coarse version:

\begin{definition} \label{def tough}
We say that a 
\gseq\ is \defi{\asly\ $t$-tough}, if for every $d\in \R_+$ \ti\ $D\in \R_+$ \st\ for every $k\in \N$, and every $(k,d)$-bounded $S \subseteq V$, all but at most $k/t$  components of $G-S$ lie inside $B_D(S)$. 
\end{definition}

Let ${\tau_\infty(G)}:= \sup\{ t \mid \g \text{ is \asly\ $t$-tough}\}$ be the \defi{\asc\ toughness} of \G. 

Again this is a \qy\ invariant: 
\begin{proposition} \label{obs tou}
If $G,H$ are \qic, then $\tau_\infty(G)=\tau_\infty(H)$.
\end{proposition}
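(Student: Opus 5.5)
By symmetry of the quasi-isometry relation, it suffices to show the following: if $G$ is \asly\ $t$-tough and $f\colon H\to G$ is an \maqy, then $H$ is \asly\ $t$-tough. Then $\tau_\infty(H)\geq\tau_\infty(G)$, and the reverse inequality follows the same way. The argument should mirror the one for \Prr{aslykcon}. We transport a bounded separator of $H$ to $G$, apply toughness there, and pull the conclusion back.

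Fix $d$, and let $S\subseteq V(H)$ be $(k,d)$-bounded, say $S=S_1\cup\dots\cup S_k$ with $\diam_H(S_i)\leq d$. Put $S'_i:=B_r(f(S_i))$ and $S':=\bigcup_i S'_i$, where $r$ is a constant depending only on $M,A$, chosen so that $f$ maps adjacent vertices of $H$ to points at distance at most $M+A$. Then each $S'_i$ has diameter at most $d':=Md+A+2r$, so $S'$ is $(k,d')$-bounded. Toughness of $G$ gives $D'=D'(d')$ such that all but at most $k/t$ components of $G-S'$ lie in $B_{D'}(S')$. Here it matters that the number of pieces $k$ is preserved exactly, not just up to a constant: toughness counts components against $k$, so the thickening must not merge or split the pieces.

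Next, let $K$ be a component of $H-S$ that is not inside $B_D(S)$, with $D$ to be chosen. Pick $x\in K$ with $d_H(x,S)>D$, and take $D$ large enough that $f(x)\notin B_{D'}(S')$; roughly $D>M(D'+r+A)$ suffices. Consider the ball $B_\rho(x)$ with $\rho\approx D/2$; it is disjoint from $S$. For each vertex $y$ on a path in $K$, every vertex on a $G$-geodesic between images of consecutive path vertices lies within $r$ of $f(K)$, but it could still hit $S'$. To avoid this, call $K$ \emph{deep} if it contains a vertex at distance greater than $D$ from $S$, and connect only the parts of $K$ far from $S$.

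To get a map from deep components to big components of $G-S'$, let $\phi(K)$ be the component of $G-S'$ containing $f(x)$. I need two things. First, $\phi(K)$ is not inside $B_{D'}(S')$, which holds by the choice of $D$. Second, $\phi$ is injective: if deep components $K_1\neq K_2$ had images of their far points in the same component of $G-S'$, pull back a path $P$ in $G-S'$ between them via a quasi-inverse $g$. Consecutive images under $g$ are at bounded distance, and by the choice of $r$ they stay away from $S$, since points of $S$ map into $f(S)\subseteq S'$ with a margin of $r$. So the pulled-back path, interpolated by geodesics of bounded length, avoids $S$, provided $r$ is large compared to the quasi-isometry constants. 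This joins $K_1$ to $K_2$ in $H-S$, a contradiction.

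Injectivity then shows that at most $k/t$ components of $H-S$ are not contained in $B_D(S)$, with $D$ depending only on $d,M,A,t$. The main obstacle is tuning $r$ correctly: it must be large enough that pulled-back paths avoid $S$, while $d'$ must still depend only on $d$ and the constants. I would handle this exactly as in the proof of \Prr{aslykcon}, reusing its path-pullback lemma if one is stated there.
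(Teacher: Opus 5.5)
Your proposal is correct and is essentially the argument the paper intends (it says only that the proof mirrors that of \Prr{aslykcon}): thickening the image of each of the $k$ pieces preserves the count $k$, and \Lr{lem fS D} with $r:=K(M,A)$ is exactly your path-pullback step, showing that the chosen far vertices of distinct components of $H-S$ land in distinct components of $G-S'$ not contained in $B_{D'}(S')$, with $D$ independent of $k$. The only blemishes are your first stated criterion for $r$, since the bound $M+A$ on images of adjacent vertices is automatic from (Q1) and the actual requirement is $r\geq K(M,A)$, and the digression about $B_\rho(x)$, which can be dropped because $\phi$ only uses one fixed far vertex per deep component.
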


As in the non-asymptotic case, an immediate consequence of the definitions is that 
\begin{observation} \label{obs tou k}
$k_\infty(G) \geq 2\tau_\infty(G)$.
\end{observation}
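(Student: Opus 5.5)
The plan is to show that if \gseq\ is \asly\ $t$-tough, then it is \asly\ $k$-connected for every integer $k \le 2t$. Taking suprema over $t$ then yields $\kappa_\infty \ge 2\tau_\infty$. The argument mirrors the classical proof that a $t$-tough non-complete graph is $2t$-connected: a separator $S$ of size $s$ leaving at least two components forces $2 \le s/t$, that is $s \ge 2t$.

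First fix $t>0$ with \gseq\ \asly\ $t$-tough, and fix an integer $k$ with $k-1 < 2t$. Given $d$, take the $D$ supplied by \Dr{def tough} for this $d$. We claim the same $D$ witnesses $(k,d,D)$-inseparability of every $G_n$. Let $S$ be $(k-1,d)$-bounded. By asymptotic toughness applied with the integer $k-1$, all but at most $(k-1)/t$ components of $G_n-S$ lie in $B_D(S)$. Since $(k-1)/t < 2$, at most one component lies outside $B_D(S)$, which is exactly the inseparability condition.

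Hence \gseq\ is \asly\ $k$-connected for every integer $k < 2t+1$. In particular this holds for every integer $k \le 2t$, and so $\kappa_\infty \ge \lfloor 2t \rfloor$. To get the clean bound $\kappa_\infty \ge 2t$ from this, use that $\kappa_\infty$ is a supremum over integers, so $\kappa_\infty \ge 2t$ is equivalent to $\kappa_\infty \ge \lceil 2t \rceil$. We have shown every integer $k < 2t+1$ is admissible. The integer $\lceil 2t \rceil$ satisfies $\lceil 2t \rceil < 2t+1$, so it is admissible, giving $\kappa_\infty \ge \lceil 2t \rceil \ge 2t$. Taking the supremum over all such $t$ gives $\kappa_\infty \ge 2\tau_\infty$.

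There are a few edge cases to check.
\begin{enumerate}
\item The case $k=1$ (so $S=\emptyset$) is handled by the same argument: toughness with the integer $0$ allows at most $0$ components outside $B_D(\emptyset)=\emptyset$. This is fine provided the toughness definition is read with $k=0$ allowed; otherwise connectedness of the $G_n$ gives $k=1$ trivially.
\item If $\tau_\infty=\infty$, the argument above applies for arbitrarily large $t$, giving $\kappa_\infty=\infty$.
\item If $\tau_\infty=0$, there is nothing to prove.
\end{enumerate}

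The only real subtlety is the integrality bookkeeping between $(k-1)/t<2$ and the supremum, together with making sure the quantifiers match. In \Dr{def tough}, $D$ depends only on $d$ and works uniformly in $k$ and $n$, which is exactly what \Dr{def asly kcon} requires.
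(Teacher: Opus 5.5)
Your proof is correct. It is exactly the direct unwinding of the definitions that the paper has in mind: the paper gives no written argument and calls the observation an immediate consequence of the definitions. The key step is that, with the same $D$, a $(k-1,d)$-bounded $S$ with $k-1<2t$ leaves at most $(k-1)/t<2$, hence at most one, component outside $B_D(S)$, and your bookkeeping via $\lceil 2t\rceil<2t+1$ and the passage to suprema is sound.
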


It is a well-known exercise that every hamiltonian graph is (at least)  1-tough. We will prove the coarse version:
\begin{proposition} \label{tough 1}
If \seq{G}\ is \asly\ hamiltonian then it is \asly\ 1-tough.
\end{proposition}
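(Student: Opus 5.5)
Let $S=S_1\cup\dots\cup S_k$ with $\diam S_i\le d$. Classically, removing $k$ vertices from a Hamilton cycle leaves at most $k$ arcs, and each component of $G-S$ contains a whole arc. I will make this coarse by cutting a cycle of the \hamfam\ into at most $k$ long arcs and showing that each component of $G_n-S$ far from $S$ must contain one of them.

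Fix $d$ and let $d':=d+2R+1$. Take the cycle $C=C_n^{d'}$ of the \hamfam, which is $R$-dense and $(d',D')$-injective, where $D'=D(d')$ and $R=R(d')$. Set $D'':=D'+d'+R+2$, and aim to show that all but at most $k$ components of $G_n-S$ lie in $B_{D''}(S)$. Let $U_i$ be the set of vertices of $C$ within distance $R$ of $S_i$; then $\diam(U_i)\le d+2R<d'$. The plan is to choose, for each $i$, a subwalk $A_i$ of $C$ with $\diam(A_i)\le D'$ that contains every vertex of $U_i$.

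The main obstacle is constructing $A_i$, since $U_i$ may meet $C$ in many scattered places. First try the following. Pick $x\in U_i$. For every $y\in U_i$ we have $d(x,y)\le d'$, so by injectivity one of the two $x$--$y$ arcs of $C$ has diameter at most $D'$; call it the short arc $P_y$. The union of all the $P_y$ is a subwalk of $C$ through $x$: it is the union of arcs going left and arcs going right, hence the longest left arc together with the longest right arc. Its diameter is at most $2D'$, which is fine after enlarging $D''$ to $2D'+d'+R+2$. There is one exceptional case, when the short arcs from both sides cover all of $C$. Then $\diam(C)\le 2D'$ and everything is within bounded distance of $S$, which is harmless unless $S=\emptyset$; for $S=\emptyset$ the claim is trivial since $G_n$ is connected. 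So assume each $A_i$ is a proper arc of $C$ with $\diam(A_i)\le 2D'$ that contains $U_i$.

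Now $C$ minus the union of the $A_i$ consists of at most $k$ arcs $Q_1,\dots,Q_m$ with $m\le k$, by the usual count: removing $k$ intervals from a circle leaves at most $k$ intervals. No vertex of any $Q_j$ lies within distance $R$ of $S$, so each $Q_j$ lies in a single component of $G_n-S$. Let $K$ be a component of $G_n-S$ not contained in $B_{D''}(S)$, and pick $v\in K$ with $d(v,S)>D''$. By $R$-denseness some $c\in C$ has $d(v,c)\le R$. A shortest $v$--$c$ path avoids $S$, so $c\in K$ and $d(c,S)>D''-R$. If $c$ lay in some $A_i$, then since $A_i$ contains a point of $U_i$ we would get $d(c,S)\le 2D'+R+d$, a contradiction with the choice of $D''$. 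So $c$ lies in some $Q_j$, and hence $Q_j\subseteq K$. Distinct components receive distinct $Q_j$'s, so there are at most $k$ such components. This verifies \Dr{def tough} with $t=1$, with $D''$ depending only on $d$.
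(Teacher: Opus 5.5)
Your choice of parameters is circular, and this is a real gap. You set $d':=d+2R+1$ where $R=R(d')$ is the density of $C_n^{d'}$, so you need a scale $d'$ with $d'\geq d+2R(d')+1$. In general no such $d'$ exists. Such a $d'$ would satisfy $2R(d')+1\leq d'$, and by the argument in the proof of \Tr{thm quacy} (see the remark in the proof of \Cr{cor lin}), a single such value already forces $(G_n)$ to be quasi-isometric to a sequence of cycles. So for the grids of \Er{ex Hilbert}, say, every \hamfam\ has $2R(d')+1>d'$ for all $d'$. Hence you can never apply $(d',D')$-injectivity to pairs in your fattened sets $U_i$, whose diameter can be as large as $d+2R(d')$.

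The fattening is unnecessary, though, and removing it repairs your argument. Take $C:=C_n^d$ with $D:=D(d)$ and $R:=R(d)$, and let $U_i:=V(C)\cap S_i$, which has diameter at most $d$. Your construction then yields arcs $A_i\supseteq U_i$ with $\diam(A_i)\leq 2D$. In the exceptional case $\diam(C)\leq 2D$, and since $U_i\subseteq S$ lies on $C$, every vertex is within $2D+R$ of $S$. The complementary arcs $Q_j$ avoid $S$, which is all you need for each to lie in a single component of $G_n-S$. Every vertex of $A_i$ is within $2D$ of $S$, so with $D'':=2D+R$ the vertex $c$ you find near $v$ must lie on some $Q_j$, and your count goes through.

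Repaired this way, your proof is a direct version of the paper's, which argues by contradiction instead. The paper picks cycle vertices $x'_j$, at distance more than $D$ from $S$, in $k+1$ distinct far components, and observes that each of the $k+1$ arcs of $C$ between consecutive $x'_j$ meets $S$. By pigeonhole some $S_m$ meets two of these arcs. Two points of $S_m$ on different arcs are within distance $d$, yet both subwalks between them pass through some $x'_j$, contradicting $(d,D)$-injectivity. That route avoids building the arcs $A_i$ and the exceptional case, and gives the slightly better bound $R+D$ in place of your $2D+R$.
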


\subsection{The main problems} \label{sec main OP}

Some of the best-known results and open problems about hamiltonicity give rise to intriguing problems on our asymptotic analogs. Be begin with a famous conjecture about \Cg s: 
\begin{conjecture}[folklore \cite{BaAut}]  \label{conj HCG}
Every finite connected Cayley graph on at least 3 vertices is hamiltonian.
\end{conjecture}

We provide two coarse versions of this. 
Let \defi{\cc} be the family of all finite connected \Cg s. 
\begin{problem}  \label{prob groups weak}
Is \cc\ weakly hamiltonian? 
\end{problem} 

\begin{problem}  \label{prob groups}
Is \cc\ \asly\ hamiltonian? 
\end{problem} 

If the answers to these 
problems are positive, one can further ask for the smallest constant $R$ making \Prb{prob groups weak} true, and the slowest growing control functions $D,R$ making \Prb{prob groups} true. Moreover, the same problems can be posed for the family \ct\ of finite vertex-transitive graphs as well. 
\medskip

Let me argue that \Prb{prob groups} may be more fruitful than \Cnr{conj HCG}.
Much of the research on the latter needs strong assumptions not only on the group, but also on the generating set defining the \Cg, see \cite{CURRAN1996,KutMarVT} for surveys. \Tr{asham qi} suggests that the choice of generating set matters less in \Prb{prob groups}. 

In support of \Prb{prob groups}, we will prove, in \Sr{sec cart}, that if $(G_n)$ and $(H_n)$ are asymptotically Hamiltonian sequences of finite (not necessarily Cayley) graphs, then the sequence of their Cartesian products $(G_n\square H_n)$ is also \asham. This suggests that \Prb{prob groups} may be more amenable to recursive arguments than \Cnr{conj HCG}. Indeed, the following analogue of the aforementioned result is wide open. Call a group $\Gamma$ \defi{hamiltonian} if $|\Gamma|\leq 2$, or all connected \Cg s of $\Gamma$ are hamiltonian:

\begin{problem}  \label{prob cart}
Is it true that if a group $\Gamma$ is the cartesian product of two finite hamiltonian groups, then $\Gamma$ must itself be hamiltonian?
\end{problem} 

On the other hand, \Prb{prob groups} can be harder than \Cnr{conj HCG} for specific groups. Let $Q_n$ stand for the $n$-dimensional Hamming hypercube. It is well-known and easy to see that $Q_n$ is hamiltonian, but I do not know how to answer the following: 

\begin{problem}  \label{prob Hamm}
Is $(Q_n)$ \asly\ hamiltonian?
\end{problem} 

As a first step towards this it could be helpful to consider the following problem, which can be thought of as a variant of the snake-in-the-box problem \cite{ZemSna} and may be of independent interest:

\begin{problem}  \label{prob snake}
Is there a universal constant $R$, such that $Q_n$ has an $R$-dense induced cycle for every $n>1$?
\end{problem}

\subsubsection*{Toughness} 
Chv\'atal's Toughness Conjecture \cite{ChvTou} states that there exists $t \in \R_{>0}$ such that every $t$-tough graph has a Hamilton cycle. The following is a coarse version:
\begin{problem} 
Is there $t \in \R_{>0}$ such that every \asly\ $t$-tough 
family of finite graphs is \asly\ hamiltonian?
\end{problem} 

The weak-hamiltonicity variant is also intriguing: 
\begin{problem} \label{prob tough dense}
For which $t \in \R_{>0}$ is there $R\in \N$ \st\ every finite $t$-tough  graph admits an $R$-dense cycle? How does $R$ depend on $t$?
\end{problem} 

We remark the known constructions disproving Chv\'atal's Conjecture for $t< 9/4$ \cite{BauBroVelTough} contain dominating cliques, and therefore 2-dense triangles. I am not aware of any non-trivial lower bound on $t$ for \Prb{prob tough dense}. It has been proved that $R=1$ suffices for $t$ close to 1 under strong additional degree assumptions, see e.g.\ \cite{SaiYamDomCycles}. 

\medskip
\subsubsection*{Expanders}

Given an $n$-vertex graph $G=(V,E)$ and $C\in \R_+$, we say that \g  is a \defi{\sCe}, if 
\begin{enumerate} 
\item \label{ex i} $|N(X)| \geq C|X|$ for every $X \subseteq V$ with $|X| <  n/2C$, and 
\item \label{ex ii} there is an edge between every two disjoint sets of at least $n/2C$ vertices,\footnote{While condition \ref{ex i} is standard throughout the expander literature, \ref{ex ii} is not. Thus the term \sCe\ may be somewhat misleading, but I decided to follow the terminology of \cite{DMMPSHam}.}
\end{enumerate} 
where the neighbourhood \defi{$N(X)$} of $X \subseteq V$ is the set of vertices of $\g \sm X$ sending an edge to $X$.

Dragani\'c, Montgomery, Munh\'a Correia, Pokrovskiy \& Sudakov \cite{DMMPSHam} recently proved that there is a universal constant  $C$ \st\ every \sCe\ is hamiltonian. This deep result implies and extends several hard-earned previous hamiltonicity results, in particular on (pseudo-)random graphs. This motivates:

\begin{problem} \label{prob exp}
Is it true that \fe\ $C\in \R$, the 
family of finite \sCe\ graphs is \asly\ hamiltonian? Is it weakly hamiltonian?
\end{problem} 

As we will see in \Sr{sec exps}, to prove the first question it would suffice to prove it for large enough $C$; this is a consequence of \Tr{asham qi} and the fact that taking powers boosts the expansion. This argument fails however for the second question.

If the answer to the second question of \Prb{prob exp} is positive, it would be interesting to determine the optimal dependence of $R$ on $c$:

\begin{problem} \label{prob R}
Determine the asymptotics of the function 
$$R(c):= \inf\{ R \in \R \cup \{\infty\} \mid \text{ every  $c$-expander has an $R$-dense cycle}\}.$$
\end{problem} 


\subsubsection*{Planar graphs}
Tutte \cite{tutte56} proved that every 4-connected planar graph admits a Hamilton cycle. 

\begin{problem}  \label{Tutte}
Must every \asly\  4-connected sequence of finite planar graphs be \asham?
\end{problem} 

It is well-known that 3-connected planar graphs need not be hamiltonian (in fact the length of the longest cycle can be of order $\Theta(n^{\log_3 2})$) \cite{ChenYu}. We observe that, likewise, \asly\  3-connected planar graphs need not be \asham:

\begin{examp}  \label{ex tcon}
Let $G$ be a finite, planar, cubic, 3-connected graph with no Hamilton cycle, which is well-known to exist. Obtain $G_n$ from \g as in \Er{ex kcon}, and recall that $\seq{G}$ is \asly\ 3-connected. Any $R$-dense cycle $C_n\subset G_n$ must meet the interior of every $O^v_n$ if $n> 2R$. Thus it must intersect 2 of the 3 attachment points of each $O^v_n$, and therefore induce a Hamilton cycle on \G, a contradiction.
\end{examp}

Shan \cite{ShanToughTriangulation} constructed  non-hamiltonian $\frac{3}{2}$-tough plane triangulations.\footnote{The constant $\frac{3}{2}$ here is optimal by Tutte's theorem and the the inequality $\kappa\geq 2 \tau$ (similarly to \Or{obs tou k}).}  This raises:

\begin{problem}  \label{prob shan}
Is there $R\in \N$ \st\ every $\frac{3}{2}$-tough plane triangulation has an $R$-dense cycle? 
\end{problem} 



\subsection{Some fractal graph theory} \label{sec main OP}

The above asymptotic notions admit infinitesimal variants applicable to arbitrary metric spaces of bounded diameter. We can still consider sequences thereof, but our notions now make sense for a single such space $(X,d)$. We just adapt Definitions~\ref{def asly kcon}, \ref{def HC} and \ref{def tough} to require that $\lim_{d\to 0^+} D(d)=0$ and ---for \Dr{def HC}--- also $\lim_{d\to 0^+} R(d)=0$, thereby defining \defi{infinitesimal} connectedness, hamiltonicity, and toughness.\footnote{There are other examples of infinitesimal versions of graph-theoretic results; see \cite[\S~1.5]{BBEGLPS}, \cite[Theorem 8.1.]{GeoPapMin}.} 

For example, it is not hard to see that the Sierpinski gasket \cs, endowed with the euclidean metric, is infinitesimally hamiltonian, using a Hamilton cycle of the graph $S_n$ of the $n$th iteration of the construction of \cs\ \fe\ $n$, as defined in \cite{TegGodSierpinski}, to obtain a \hamfam\ of \cs. Similarly, \Er{ex Hilbert} can be adapted ---in fact simplified--- to prove that the unit square is infinitesimally hamiltonian.

\pdfFig{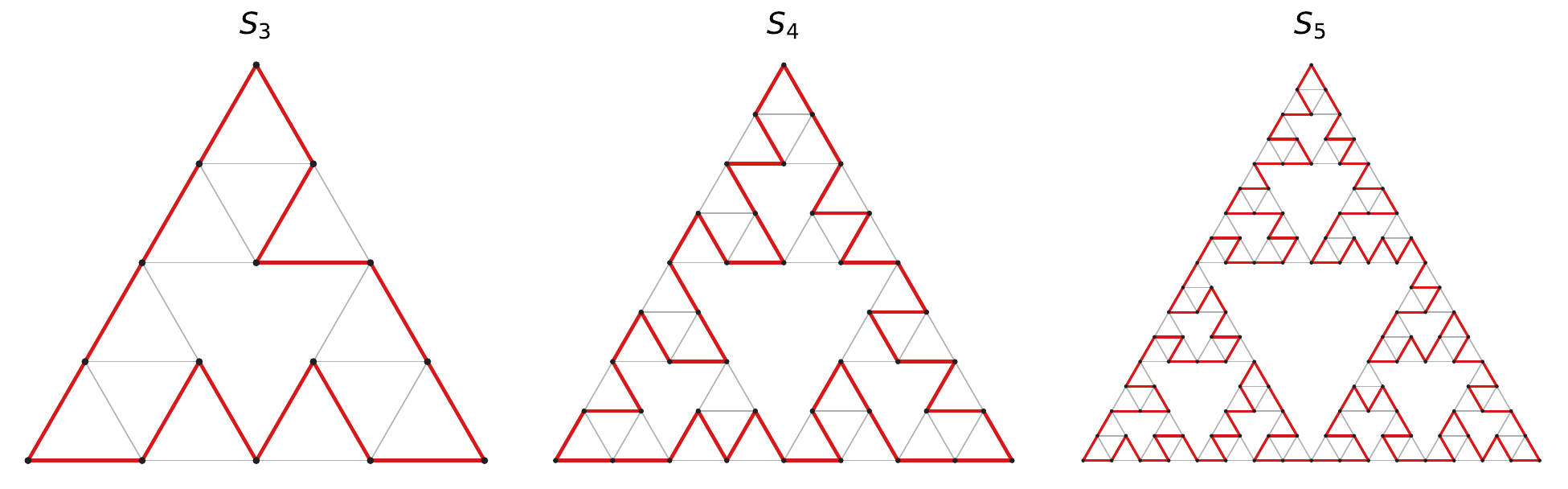}{Initial members of a \hamfam\ of the Sierpinski gasket.}{.8}

Many of our above statements and their proofs apply almost verbatim to the infinitesimal variants, and this applies to some of our problems too. For example, we have an infinitesimal variant of \Prb{Tutte}:   

\begin{problem}  \label{Tutte e}
Must every \infly\  4-connected Riemannian surface that embeds into $\BS^2$ be infinitesimally hamiltonian?
\end{problem}

\subsection{Relation to small generators of the cycle space}  \label{intro lssc}

We say that a graph \g is \defi{$k$-generated} if its cycle space (over $\Z_2$) is generated by a set of cycles each of length at most $k$. We say that \gseq\ is $k$-generated if each $G_n$ is. In \Sr{sec kgen} we will prove:

\begin{proposition} \label{kinfty}
Let \gseq\ be a $k$-generated graph sequence for some $k\in \N$.
If $\gseq$ is asymptotically $2$-connected, then it is asymptotically $n$-connected for every $n\in\mathbb N$. 
\end{proposition}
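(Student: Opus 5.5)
The idea is to show that in a $k$-generated graph, a separator made of several bounded pieces can only do the work of one bounded piece, and then to apply asymptotic $2$-connectivity to that single piece. Fix $m\in\N$ and $d\in\N$. Let $D_2(\cdot)$ be the function witnessing that \gseq\ is asymptotically $2$-connected. I would set $d':=(m-1)(d+k+2)+2$ and $D:=D_2(d')+2$, and show that every $G_n$ is $(m,d,D)$-inseparable.

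Suppose instead that $S=S_1\cup\dots\cup S_{m-1}$, with each $\diam(S_i)\le d$, and that $G_n-S$ has two components $C,B$, each containing a vertex outside $B_D(S)$. Let $X$ be the component of $G_n-C$ containing $B$. Then $G_n-X$ is connected, because every other component of $G_n-C$ sends an edge to $C$. Hence the edge cut $F:=E(X,V\sm X)$ is a bond. Every edge of $F$ joins a vertex of $C$ to a vertex of $X$ adjacent to $C$, and the latter vertex lies in $S$. So all endpoints of edges in $F$ lie within distance $1$ of $S$.

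The key step, and the main obstacle, is a connectivity lemma for bonds in $k$-generated graphs. Define an auxiliary graph on $F$ in which two edges are adjacent when they lie on a common generating cycle of length at most $k$. I claim this auxiliary graph is connected. Suppose $F=F_1\sqcup F_2$ with no generator meeting both parts. Every cycle meets the cut $F$ in an even number of edges, so each generator meets $F_1$ evenly. Thus $F_1$ is orthogonal to a generating set of the cycle space, hence to the whole cycle space, and is therefore a cut. Since $F$ is a bond, this forces $F_1\in\{\emptyset,F\}$.

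Consequently, consecutive edges in this auxiliary graph have endpoints within distance $k$ of each other. Now group the pieces $S_i$ into clusters by chaining those at mutual distance at most $k+2$. Any chain of $F$-endpoints with jumps of at most $k$, staying within distance $1$ of $S$, stays near a single cluster. Therefore all endpoints of $F$ lie in a set $T$ with $\diam(T)\le d'$, where $T$ is the $1$-neighbourhood-trimmed union of one cluster, intersected with $V(C)\cup S$.

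Removing $T$ separates $C$ from $X\supseteq B$, since every edge between them is in $F$. So $C$ and $B$ lie in distinct components of $G_n-T$. Both contain vertices at distance more than $D-1\ge D_2(d')$ from $T$, because $T\subseteq B_1(S)$. This contradicts $(2,d',D_2(d'))$-inseparability. The technical details to verify are the cluster bound on $\diam(T)$ and the claim that the component of $C$ avoiding $T$ still contains the far vertex, which holds because that vertex is far from $S\supseteq T\cap S$ and $T\setminus S\subseteq C$ lies within distance $1$ of $S$.
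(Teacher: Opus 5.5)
Your argument is correct, but you obtain the crucial coarse-connectivity step differently from the paper.

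\textbf{The paper's route.} The paper shrinks $S$ to an inclusion-minimal subset that still separates two far vertices $u,v$. Such a subset inherits $(m-1,d)$-boundedness and the distance bound. The paper then cites the lemma of Tim\'ar and Babson--Benjamini: inclusion-minimal vertex separators in $k$-generated graphs are $k/2$-near-connected. Chaining the pieces through this near-connectedness shows that the whole minimal separator has diameter less than $n(d+k)$, which contradicts $2$-inseparability.

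\textbf{Your route.} You construct a canonical bond $F=E(X,V\setminus X)$ from the component structure. You then prove directly, using orthogonality of the cut space and the cycle space, that the edges of a bond are linked by generating cycles. Since only the pieces of $S$ near $F$ matter, you need a clustering step that the paper gets for free from minimality. Your route is self-contained: it essentially reproves the edge version of the cited lemma, which is all that is needed here. It avoids passing to a minimal separator, at the cost of slightly worse constants.

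\textbf{A possible simplification.} You can tidy the end by letting $T$ consist only of the endpoints of edges of $F$ that lie in $X$. This works because:
\begin{itemize}
\item these endpoints lie in $S$;
\item two such endpoints of edges on a common generator are at distance at most $k/2$, so the clustering threshold drops to $k/2$;
\item every $C$--$B$ path crosses $F$ and hence meets $T$;
\item vertices far from $S$ are automatically far from $T$.
\end{itemize}
This removes the shifts by $2$ in $d'$ and $D$, and it also removes the somewhat muddled verification in your last sentence about $T\setminus S$ and the far vertex of $C$.
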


This complements a result of Albrechtsen \cite{AlbCoa}, saying that $k$-generated graphs satisfy a coarse version of Menger's theorem.

Since grids are 4-generated, \Prr{kinfty} implies the claim we made in \Er{ex Zd}. More generally, it implies that each finitely presented, 1-ended \Cg\  has infinite asymptotic connectivity. But in fact this extends to the non-finitely-presented case and beyond: 

\begin{proposition} \label{onended}
Let \g be a connected, infinite, \lf, vertex-transitive graph. Then $\kappa_\infty(G)=\infty$ \iff\ \g contains infinitely many pairwise disjoint rays\footnote{A \defi{ray} is a 1-way infinite path.} and no two rays of \g can be separated by a finite vertex set.\footnote{In other words, \g is 1-ended, and its unique end is thick.}
\end{proposition}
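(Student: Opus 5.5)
The statement to prove is \Prr{onended}: for a connected, infinite, \lf, vertex-transitive graph \G, we have $\kappa_\infty(G)=\infty$ \iff\ \g has one thick end.

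For the forward direction I would argue by contrapositive. If two rays can be separated by a finite set $S$, then $G-S$ has two infinite components, hence two components of unbounded diameter. Since $S$ is finite, it is $(1,\diam S)$-bounded, so \g is not $(2,\diam S,D)$-inseparable for any $D$. Thus $\kappa_\infty(G)\le 1$.

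If instead \g is 1-ended but the end is thin, I would use the structure theory of transitive graphs. A thin end in a transitive graph forces \g to have at least two ends (Halin/Jung: transitive graphs with a thin end are two-ended), so this case reduces to the previous one. For safety I would also give a direct argument. Let $k$ be the maximum number of disjoint rays. Then there are finite separators $S_i$ of size $k$ converging to the end. Vertex-transitivity moves these, together with the $k$ disjoint rays, so that each $S_i$ has bounded diameter in a suitable sense. This yields the needed contradiction with finite cut size.

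For the backward direction, assume \g is 1-ended with a thick end. Fix $k$ and $d$, and let $S$ be $(k-1,d)$-bounded. Each piece of $S$ is contained in a ball $B_d(s_i)$, and these balls have uniformly bounded cardinality by local finiteness and transitivity. So $S$ is contained in a finite set of size at most $(k-1)|B_d|$. The task is to show that every component of $G-S$ except the infinite one lies in $B_D(S)$, with $D$ depending only on $k$ and $d$. Since $G$ is 1-ended, $G-S$ has exactly one infinite component, but finite components could be large. The key step is a uniform bound on them:
\begin{quote}
there is $D=D(k,d)$ \st\ every finite component of $G-S$ lies within $B_D(S)$.
\end{quote}

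I expect this uniform bound to be the main obstacle. I would argue by contradiction using transitivity and compactness. Suppose there are sets $S_n$, each the union of $k-1$ sets of diameter at most $d$, with finite components $K_n$ of $G-S_n$ containing a vertex $x_n$ at distance more than $n$ from $S_n$. Translate by automorphisms so that $x_n=o$ is a fixed root. The pieces of $S_n$ that stay within bounded distance of $o$ have a convergent subsequence, by local finiteness. Pieces escaping to infinity I would handle by induction on the number of pieces: I would induct on $k$ and choose the ``far'' and ``near'' pieces appropriately. This is subtle, because $K_n$ surrounds $o$ and is bounded by pieces that are all far from $o$.

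The cleaner route is to use thickness. Since the end is thick, from each vertex there are $k$ disjoint rays, and these can be chosen in the ball around $o$ uniformly by transitivity. I want them to be pairwise far apart, at distance more than $2d$ outside some radius. Then each piece of $S_n$ can meet at most one of them, so one ray avoids $S_n$ and connects $o$ to infinity. This contradicts the finiteness of $K_n$.

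The technical lemma needed is therefore the following: in a transitive graph with a thick end, for every $k$ and $d$ there exist $k$ rays from near $o$ that are eventually pairwise $2d$-far. There is one further point to handle here. These rays also need to avoid $S_n$ near $o$, but this is automatic since $d(o,S_n)>n$, once the rays are chosen so that they start within bounded distance of $o$ and are far apart away from that start.

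To prove the lemma, I would pass to the graph $G^{2d+1}$, which still has one thick end. By Halin's theorem it contains infinitely many disjoint rays; disjointness in the power means distance greater than $2d+1$ in \G. These rays are then joined back to $o$ by paths inside a bounded ball, the bound depending only on $k$ and $d$, using transitivity.
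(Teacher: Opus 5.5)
Your forward direction is essentially the paper's. A finite set separating two rays gives $\kappa_\infty(G)\le 1$, and the case of only finitely many disjoint rays reduces to this by the known fact about transitive graphs, which the paper cites from Thomassen. Your backup ``direct argument'' there is too vague to be a proof, but it is not needed.

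\textbf{The gap is in the backward direction.} Your proof rests on the lemma that there are $k$ rays near $o$ which are eventually pairwise more than $d$ apart, and your justification of it is wrong. $G^{2d+1}$ has the same vertex set as $G$, so disjoint rays of $G^{2d+1}$ are merely vertex-disjoint. They may be joined by edges of $G^{2d+1}$, and even of $G$: two adjacent parallel lines in a grid are disjoint rays in every power. Being at distance greater than $2d+1$ in $G$ would require that no edge of $G^{2d+1}$ joins the rays, and Halin's theorem gives nothing of the sort. Moreover, a ray of $G^{2d+1}$ is not a ray of $G$, and filling its jumps with short paths destroys any separation you had. As it stands the lemma is an unproved coarse-Menger-type statement. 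Such statements are delicate (the coarse Menger conjecture fails for general graphs), so it cannot be waved through.

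\textbf{The fix.} You do not need metrically separated rays at all. You already observed that $|S_n|\le (k-1)\,|B_d(o)|=:N-1$, by local finiteness and transitivity. Thickness gives $N$ pairwise disjoint rays. Since this family is fixed, some ball $B_\rho(o)$ meets all of them. For $n>\rho$, the ball $B_\rho(o)$ is connected, contains $o$ and avoids $S_n$, so it lies in the finite component $K_n$. Each of the $N$ rays must therefore leave $K_n$, and its first vertex outside $K_n$ lies in $S_n$. Disjointness then gives $|S_n|\ge N$, a contradiction.

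This is exactly the paper's proof. It bounds $|S_D|$ uniformly using regularity, translates the far vertex $r$ of the bad component to a fixed vertex, and lets enough rays of a fixed disjoint family cross the separator. The cardinality bound, not metric separation of rays, is what makes the argument work.
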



\subsection{Spanning trees} \label{sec ast}

Our  notions of coarse surjectivity and injectivity used to define \asc\ hamiltonicity can be applied more generally, and in particular we can adapt \Dr{def HC} to define an {\aST} of \gseq. To begin with, we say that a tree $T\subseteq G$ is $(d,D)$-injective in \G, if for every $x,y\in V(T)$ with $d_G(x,y)\leq d$, the \pth{x}{y}\ $P$ in $T$ satisfies $\diam_G(P)\leq D$.  An  \defi{asymptotic spanning tree (family)} of \gseq\ can then be defined as in \Dr{def HC}, except that each $C^d_n$ is a tree rather than a cycle: namely an $R(d)$-dense and $(d,D(d))$-injective subtree of $G_n$. Unlike \Dr{def HC}, this definition can be applied to a singleton infinite graph \G.\footnote{In fact, one can try to extend \Dr{def HC} to (countably) infinite 1-ended graphs using the topological notion of Hamilton circle \cite{DiestelBook25}.}

An easy example of a family \gseq\ that does not have an \astf\ is obtained by letting $G_n$ be a cycle of length $n$ with pendant paths of length $n$ attached to each vertex. On the other hand, it is easy to turn any \asc\ \hamfam\ into an \astf\ (with larger $R$) by removing a subpath of small diameter. I would be interested to see natural sufficient or necessary conditions guaranteeing \aST s. For example, \Prr{kinfty} motivates the following:  

\begin{problem}  \label{}
Let $k\in \N$, and let \gseq\ be an \asly\  2-connected sequence of $k$-generated  graphs. Must \gseq\ have an \aST? Must it be asympto\-tically hamiltonian?
\end{problem} 

The same questions can be asked with the condition of being $k$-generated replaced by a) $\delta$-hyperbolicity in the sense of Gromov \cite{gromov} b) other curvature-type conditions, or c) being a triangulation of $\BS^2$.  For the first question, the condition of being \asly\  2-connected may be superfluous. 

Some of the above problems on \asc\ hamiltonicity, in particular \Prb{prob groups} and \ref{prob Hamm}, have natural relaxations for \aST s, and I do not have proofs for them. But \aST s can raise additional questions about infinite graphs:

\begin{problem}  \label{prb Cgast}
Does every finitely generated \Cg\ have an \aST? 
\end{problem} 

\smallskip

Let me make a side remark about \Cnr{sec main OP}. Motivated by that problem, Benjamini\footnote{private communication in 2023} asked for the following relaxation, which we will prove using toughness:\footnote{This result was independently discovered, with the same proof, by Li \& Methuku \cite{LiMet}.}

\begin{proposition}\label{}
Every finite, connected, vertex-transitive graph \g has a spanning tree of maximum degree at most 3.
\end{proposition}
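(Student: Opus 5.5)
We combine a toughness property of vertex-transitive graphs with Win's theorem: if a connected graph $G$ satisfies $c(G-S)\leq (k-2)|S|+2$ for every $S\subseteq V(G)$, where $c$ counts components, then $G$ has a spanning tree of maximum degree at most $k$. For $k=3$ the condition reads $c(G-S)\leq |S|+2$. So it suffices to show that every finite connected vertex-transitive graph satisfies $c(G-S)\leq |S|$ for every nonempty $S$, i.e.\ is $1$-tough, which is stronger than needed. The case $S=\emptyset$ is trivial, since $G$ is connected. If Win's theorem is not considered citable, I would reprove the special case $k=3$ by the standard maximal-tree exchange argument, but I intend to cite it.

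The main step is the toughness bound. Let $G$ be finite, connected and vertex-transitive of degree $r$. For $r\leq 2$, $G$ is $K_1$, $K_2$ or a cycle, and the claim is immediate: these already have maximum degree at most $2$, so we may also just exhibit the tree.

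For $r\geq 3$, I would use the known fact (Mader, Watkins) that a connected vertex-transitive graph of degree $r$ has connectivity $\kappa\geq 2(r+1)/3$. Combined with the classical bound $\tau\geq \kappa/r$ for toughness, this gives $\tau\geq 2(r+1)/(3r)$, which is at least $1/2$ but only exceeds $1$ when $r<2$. So this route alone falls short of $1$-toughness.

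I therefore only aim for $c(G-S)\leq |S|+2$ directly, by double counting edges. Suppose $S$ is a separator with $c:=c(G-S)$ components $K_1,\dots,K_c$. Each component sends at least $\kappa$ edges to $S$, because its neighbourhood $N(K_i)\subseteq S$ is a separator of size at least $\kappa$. On the other side, $S$ receives at most $r|S|$ edges in total. Hence
$$c\,\kappa\leq r|S|, \qquad\text{so}\qquad c\leq \frac{r|S|}{\kappa}\leq \frac{3r}{2(r+1)}\,|S|<\frac{3}{2}|S|.$$
This is too weak for small $|S|$ compared to $|S|+2$.

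The refinement is to count only the neighbourhoods in $S$ of components whose vertices all have their neighbours inside the component or in $S$. I expect this counting to be the main obstacle. The plan is as follows. Singleton components $\{v\}$ contribute exactly $r$ edges each. A non-singleton component $K$ has $|N(K)|\geq\kappa$. Beyond that, vertex-transitivity gives atoms: a minimum-size $N(K)$ bounds the structure via the Mader-Watkins analysis of atoms, whose sizes are at least $r-\kappa+1$. Using this, one shows that each component sends at least $r$ edges to $S$. The tight case is $K$ being an atom, where $|K|(r-|K|+1)$-type counting gives at least $r$ edges. It then follows that $c\leq |S|$, and Win's theorem with $k=3$ finishes the proof.

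If the atom counting resists, my fallback is a two-step argument. First, the components that send fewer than $r$ edges to $S$ have size bounded below. Second, such components are few, by comparing them with $|S|$ via a Hall-type matching from components into $S$, which would yield $c\leq |S|+2$ directly.
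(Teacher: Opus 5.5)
Your reduction is the same as the paper's: prove $1$-toughness, then apply Win's theorem with $k=3$. The paper does not prove $1$-toughness; it cites it from Babai's survey. The gap is in your own proof of $1$-toughness. The decisive claim, that every component $K$ of $G-S$ sends at least $r$ edges to $S$, is asserted rather than proved, and the vertex-atom facts you invoke do not give it:
\begin{itemize}
\item The count $|K|(r-|K|+1)\ge r$ holds only when $1\le |K|\le r$. The bound $|A|\ge r-\kappa+1$ on atom sizes is a lower bound, so it does not help.
\item For larger components, vertex connectivity gives only $e(K,S)\ge |N(K)|\ge\kappa$, which falls short of $r$ whenever $\kappa<r$.
\item Nothing in your sketch passes from an arbitrary component to an atom; a component need not even satisfy $|N(K)|=\kappa$.
\end{itemize}
The Hall-type fallback is not yet an argument either.

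The missing ingredient is Mader's theorem: a connected vertex-transitive graph of degree $r$ has edge-connectivity exactly $r$. Its proof uses \emph{edge}-atoms (minimal sets of minimum edge boundary, which are blocks of imprimitivity), not the vertex atoms behind the bound $\kappa\ge 2(r+1)/3$. Every edge leaving a component $K_i$ of $G-S$ ends in $S$, so Mader's theorem gives $e(K_i,S)\ge r$ for all $i$. Your double count then finishes at once: $c\,r\le\sum_i e(K_i,S)\le r|S|$, hence $c\le|S|$. With this citation, or a direct citation of $1$-toughness as in the paper, the proof is complete. The detours through $\kappa\ge 2(r+1)/3$ and $\tau\ge\kappa/r$ can then be dropped.
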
 
\begin{proof} 	
It is known that \g must be 1-tough \cite[p.~25]{BaAut}. By a theorem of Win \cite{WinKTrees} (see \cite{ShaLon} for details), every $t$-tough graph has a spanning tree of maximum degree at most $\ceil{1/t}+2$. 
\end{proof}
\section{Preliminaries}

We follow the terminology of Diestel~\cite{DiestelBook25} for graph-theoretic terms, and that of  Godsil \& Royle \cite{GR01} for the few group theoretic ones that we will need.
Disconnected graphs will be irrelevant.


A graph $G=(V,E)$ is \defi{hamiltonian} if it contains a \defi{Hamilton cycle}, i.e.\ a cycle containing all of $V$.

\subsection{Distances} \label{sec dist}


Let $G$ be a connected graph.
We write~\defi{$d_G(v, u)$} for the distance of the two vertices~$v$ and~$u$ in~$G$, i.e.\ the minimal number of edges in any \pth{v}{u}. 

For two sets~$U$ and~$U'$ of vertices of~$G$, we write~\defi{$d_G(U, U')$} for the minimum distance of two elements of~$U$ and~$U'$, respectively.
If one of~$U$ or~$U'$ is just a singleton, then we omit the braces, writing $d_G(v, U') := d_G(\{v\}, U')$ for $v \in V(G)$.


Given a set~$U$ of vertices of~$G$, the \defi{ball (in~$G$) around~$U$ of radius $r \in \N$}, denoted by~\defi{$B_U(r)$},  is the set of all vertices in~$G$ of distance at most~$r$ from~$U$ in~$G$. If~$U = \{v\}$ for some~$v \in V(G)$, then we omit the braces, writing~$B_v(r)$.

\subsection{Quasi-isometries} \label{sec QI}


\medskip

For $M \in \R_{\geq 1}$ and $A \in \R_{\geq 0}$, an \defi{$(M, A)$-quasi-isometry} from a graph~$G$ to a graph~$H$ is a map~$\varphi : V(G) \rightarrow V(H)$ such that
\begin{enumerate}[label=\rm{(Q\arabic*)}]
    \item \label{quasiisom:1} $M^{-1} \cdot d_G(g,h) - A \leq d_H(\varphi(g),\varphi(h)) \leq M\cdot d_G(g,h)+A$ for every $g,h \in V(G)$, and
    \item \label{quasiisom:2} for every $v \in V(H)$ there is $h \in V(G)$ such that $d_H(v,\varphi(h)) \leq A$.
\end{enumerate}

\medskip

The following is probably well-known to experts on coarse separation, but I could not find an explicit reference. Given a graph $G=(V,E)$ and $S \subseteq V$, we say that $S$ \defi{separates} $x,y\in V$ if $S$ intersects every \pth{x}{y} (i.e.\ we allow $\{x,y\} \cap S \neq \emptyset$). 

\begin{lemma}\label{lem fS D}	
Let $\phi: V(G) \to V(H)$ be a \maqy, and suppose $S \subset V(G)$ separates $x,y\in V(G)$, then  $B_{K}(\phi(S))$  separates  $\phi(x),\phi(y)$ in $H$, where $K=K(M,A)$ depends on $M,A$ only.
\end{lemma}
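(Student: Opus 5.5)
We argue by contraposition: suppose $P$ is a \pth{\phi(x)}{\phi(y)} in $H$ avoiding $B_K(\phi(S))$, with $K$ to be chosen. From $P$ we will build an \pth{x}{y} in $G$ avoiding $S$, contradicting the assumption that $S$ separates $x,y$.

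\emph{Pulling $P$ back.} Write $P=v_0v_1\ldots v_m$ with $v_0=\phi(x)$ and $v_m=\phi(y)$. By \ref{quasiisom:2}, for each $0<i<m$ we choose $g_i\in V(G)$ with $d_H(v_i,\phi(g_i))\leq A$, and we set $g_0:=x$ and $g_m:=y$. Consecutive points are then close in $H$: $d_H(\phi(g_i),\phi(g_{i+1}))\leq 2A+1$. Applying the lower bound in \ref{quasiisom:1} gives
$$d_G(g_i,g_{i+1})\leq M(3A+1)=:L.$$
For each $i$, join $g_i$ to $g_{i+1}$ by a geodesic $Q_i$ in $G$. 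The concatenation $Q_0Q_1\cdots Q_{m-1}$ is an $x$--$y$ walk in $G$, and it contains an \pth{x}{y}.

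\emph{This walk avoids $S$.} Every vertex $w$ of $Q_i$ satisfies $d_G(w,g_i)\leq L$. Hence, by the upper bound in \ref{quasiisom:1},
$$d_H(\phi(w),v_i)\leq d_H(\phi(w),\phi(g_i))+d_H(\phi(g_i),v_i)\leq ML+A+A.$$
Now suppose some such $w$ lies in $S$. Then $v_i\in B_{ML+2A}(\phi(S))$. Choosing $K:=ML+2A=M^2(3A+1)+2A$, this contradicts the fact that $P$ avoids $B_K(\phi(S))$. The endpoints need no separate treatment: $\phi(x)=v_0\notin B_K(\phi(S))$ already forces $x\notin S$, and likewise $y\notin S$, which is consistent with the convention that $S$ may contain $x$ or $y$. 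Since $K$ depends only on $M$ and $A$, this completes the argument.

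\emph{Main obstacle.} The only delicate point is the bookkeeping in the second step: we must check that every vertex of every connecting geodesic $Q_i$ stays within distance $K$ of the path $P$ in $H$. Once this is verified, the rest follows directly from the definitions.
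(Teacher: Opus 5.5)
Your proposal is correct and is essentially the paper's proof, only phrased contrapositively. Both pull the path back via quasi-surjectivity with the endpoints fixed at $x,y$, join consecutive preimages by geodesics of length at most $L=M(3A+1)$, and obtain the same constant $K=ML+2A$.

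Neither you nor the paper treats the degenerate case $\phi(x)=\phi(y)$ with $x\neq y$, where $m=0$ and the assignments $g_0:=x$, $g_m:=y$ clash. It is harmless: there $d_G(x,y)\leq MA\leq L$, and a single $x$--$y$ geodesic gives the same bound.
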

\begin{proof} 
Let
$P= (\phi(x)=)y_0,y_1,\ldots,y_n(=\phi(y))$
be a $\phi(x)$--$\phi(y)$~path in $H$. For every $i$, choose
$z_i\in V(G)$ such that
$d_H(y_i,\phi(z_i))\leq A$, 
taking $z_0=x$ and $z_n=y$. Since $y_i$ and $y_{i+1}$ are adjacent in $H$, we have 
$d_H(\phi(z_i),\phi(z_{i+1})) \leq 1+2A$. 
This implies 
\begin{equation}
 d_G(z_i,z_{i+1})
 \leq M\bigl(d_H(\phi(z_i),\phi(z_{i+1}))+A\bigr)
 \leq M(1+3A)=:L.
 \label{eq:domain-step}
\end{equation}
For each $i$, choose a shortest $z_i$--$z_{i+1}$ path in $G$.
Concatenating these paths gives an $x$--$y$ walk in $G$, which contains
an $x$--$y$~path. By assumption, this path meets $S$. Therefore, for some
$i$, the chosen $z_i$--$z_{i+1}$ path contains a vertex $s\in S$. By
\eqref{eq:domain-step},
$
 d_G(z_i,s)\leq L.
$
Thus we have
\begin{equation}
\begin{aligned}
 d_H(y_i,\phi(S))
 &\leq d_H(y_i,\phi(z_i))+d_H(\phi(z_i),\phi(s))\\
 &\leq A+M d_G(z_i,s)+A\\
 &\leq ML+2A.
\end{aligned}
\end{equation}
Letting  $K=K(M,A):=ML+2A$, we thus have $y_i\in B_K(\phi(S))$. Since $P$ was arbitrary, every $\phi(x)$--$\phi(y)$ path in $H$ meets $B_K(\phi(S))$ as claimed.
\end{proof}

\section{Coarse invariance} \label{sec QII}

In this section we prove that our notions of asymptotic connectivity, hamiltonicity, and toughness are invariant under quasi-isometries. We start with asymptotic connectivity:

\begin{proof} [Proof of \Prr{aslykcon}]
Let $\phi: V(G) \to V(H)$ be a \maqy.  Suppose \g fails to be \asly\ \kcon, which means that for some $d$, and arbitrarily large $D$,  \ta\  $(k-1,d)$-bounded separators $S_D$ witnessing that \g  is not $(k,d,D)$-inseparable. 
Then by \Lr{lem fS D}, $S'_D:= B_{K(M,A)}\phi(S_D)$ is a family of separators of $H$ \st\ 
each $S'_D$ is $(k-1,d')$-bounded 
for $d':=M(d+A)+ 2K(M,A)$, with at least two components containing vertices that are at least $D':= (M^{-1}D-A) - K(M,A)$ far from $S'_D$. Since $D'\to \infty$ as $D \to \infty$, this proves that $H$ is not $(k,d',D)$-inseparable for arbitrarily large $D$. Thus if \g fails to be \asly\ \kcon, then so does $H$. Since quasi-isometry is an equivalence relation, the converse holds too. 
\end{proof}

The proof of \Prr{obs tou} is similar, except that the parameter $k$ can vary with $D$. I leave the details to the interested reader.

\medskip
We finish this section with the following observation, providing an alternative definition of \asc\ connectivity alluded to in the introduction; namely, instead of requiring that at most one component of $G-S$ is not inside the $D$ neighbourhood of $S$ in the definition of $(k,d,D)$-inseparable, we could require that at most one component  of $G-S$ has diameter greater than $D'$: 
\begin{observation}\label{lem kdb}	
Let $\gve$ be a graph, let  $S$ be a $(k,d)$-bounded subset of $V$, and $C$ a component of $G -S$. If $C \subseteq B_R(S)$, then $\diam_G(C)< D(R):= 2k(2R+d)$.
\end{observation}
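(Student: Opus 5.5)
The plan is to bound $\diam_G(C)$ by walking inside $C$ and tracking which of the at most $k$ pieces of $S$ each vertex of $C$ lies close to.

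Write $S=S_1\cup\dots\cup S_k$ with $\diam_G(S_i)\le d$, where some $S_i$ may be empty. Every vertex of $C$ lies within distance $R$ of some $S_i$. Fix $x,y\in C$; we want $d_G(x,y)<2k(2R+d)$.

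First assign to each vertex $v\in C$ an index $i(v)$ with $d_G(v,S_{i(v)})\le R$. Two vertices carrying the same index are within distance $R+d+R=2R+d$ of each other: go from the first vertex to $S_i$, move inside $S_i$ (diameter at most $d$), then go out to the second vertex. Vertices with the same index are therefore close, so the only issue is how many distinct indices appear along a route from $x$ to $y$.

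Next take an $x$--$y$ path $P$ inside $C$, which exists since $C$ is connected. Build a shortcut walk as follows.
\begin{enumerate}
\item Let $v_0=x$.
\item Given $v_j$, let $v_{j+1}$ be the \emph{last} vertex on $P$ whose index equals $i(v_j)$.
\item If $v_{j+1}\ne y$, step along $P$ to the next vertex after $v_{j+1}$ and repeat.
\end{enumerate}
Because we always jump to the last occurrence of an index, each index is used at most once. So there are at most $k$ jumps, each of length at most $2R+d$, and at most $k$ single steps between jumps, each of length $1$.

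Adding these up gives $d_G(x,y)\le k(2R+d)+k$. This is at most $2k(2R+d)$ whenever $2R+d\ge1$. In the degenerate case $R=d=0$, we have $C\subseteq S$, which is impossible because $C$ is a component of $G-S$. So the bound holds; strictness follows by tightening the count, for instance by noting that the final jump needs no step after it. The main care point is the bookkeeping that each index is used at most once. The factor $2$ in the stated bound leaves comfortable slack for it.
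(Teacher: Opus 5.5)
Your proof is correct and follows essentially the paper's route. Both arguments assign each vertex of $C$ to a piece $S_i$ within distance $R$, observe that two vertices of the same class are within $2R+d$ of each other, and bound how many times a route inside $C$ switches class. The paper argues non-constructively: a minimal-length sequence of points of $C$ with consecutive distances at most $2R+d$ meets each class at most twice, so it has at most $2k$ points. Your greedy last-occurrence shortcut along a path uses single edges of the path for the transitions between classes, and therefore gives the slightly sharper bound $k(2R+d)+k-1$.
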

\begin{proof} 
Let $S_1, \ldots, S_k$ be a decomposition of $S$ into subsets of diameter at most $d$ (some of which may be empty). Since $C \subseteq B_R(S)$, we can write $C$ as a union of subsets $C_1, \ldots, C_k$ \st\ $d_G(x,S_i)\leq R$ holds \fe\ $x\in C_i, i\in [k]$. To bound $\diam_G(C)$, pick $v,w\in V(C)$, and a sequence $v=x_1, \ldots, x_n=w$ \st\ $d_G(x_j,x_{j+1}) \leq 2R+d$, and minimizing $n$; such a sequence exists since $C$ is connected and any \pth{v}{w}\ in $C$ is a candidate. Note that our sequence cannot meet any $C_i$ more than twice, because any $x_j,x_m\in C_i$ satisfy $d_G(x_j,x_m)\leq 2R+d$ by the triangle inequality. Thus $n\leq 2k$ by the pigeonhole principle. Applying the  triangle inequality to our sequence thus yields $d_G(v,w)\leq 2k(2R+d)$, and so $\diam_G(C)$ satisfies the same bound. 
\end{proof}

\subsection{Coarse invariance of \asc\ hamiltonicity}

In this section we prove \Tr{asham qi}. A \defi{$(d,D,R)$-cycle} is a cycle $C$ in a graph \g satisfying both conditions of \Dr{def HC}, i.e.\ $C$ is $R$-dense and $(d,D)$-injective in $G$. We observe that this definition  extends verbatim to closed \defi{walks}:  
a {walk} in $G$ is a finite sequence
$W=x_0x_1\ldots x_\ell$ of vertices of $G$ such that $x_ix_{i+1}\in E(G)$ for every
$i\in [\ell-1]$. 
We say that $W$ is \defi{closed} if $x_0=x_\ell$.

More generally, if instead of $x_ix_{i+1}\in E(G)$ we require $d_G(x_i,x_{i+1})\leq K$ for some $K\in \N$, then we obtain a (closed) \defi{$M$-near-walk}. The above definition of $(d,D,R)$-cycle extends verbatim to near walks.

\medskip
We first relax \Tr{asham qi} by replacing cycles by (near) walks:

\begin{lemma} \label{lem walks}
Let $W$ be a closed $(d,D,R)$-walk in a graph \G, and let $\phi: G \to H$ be a \maqy. Then $\phi(W)$ is a closed, $(d',D')$-injective, $R'$-dense, $(M+A)$-near-walk in $H$, where $d'= \frac{d}{M}-A, \quad D'= MD+A$, \quad and $R'= MR+2A$.
\end{lemma}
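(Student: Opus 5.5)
We verify the four properties of $\phi(W)=\phi(x_0)\phi(x_1)\ldots\phi(x_k)$ one at a time, using only \ref{quasiisom:1} and \ref{quasiisom:2}.

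\emph{Closedness and near-walk property.} Since $x_0=x_k$ we get $\phi(x_0)=\phi(x_k)$. Consecutive vertices satisfy $d_G(x_i,x_{i+1})\leq 1$, so by \ref{quasiisom:1} we have $d_H(\phi(x_i),\phi(x_{i+1}))\leq M+A$. Hence $\phi(W)$ is a closed $(M+A)$-near-walk.

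\emph{Density.} Let $v\in V(H)$. By \ref{quasiisom:2} there is $h\in V(G)$ with $d_H(v,\phi(h))\leq A$. By $R$-density of $W$ there is some $x_i$ with $d_G(h,x_i)\leq R$. Then \ref{quasiisom:1} gives $d_H(\phi(h),\phi(x_i))\leq MR+A$, and the triangle inequality gives $d_H(v,\phi(x_i))\leq MR+2A=R'$.

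\emph{Injectivity.} This is the only step needing care, because the near-walk $\phi(W)$ may visit the same vertex of $H$ at several indices. We therefore read the definition index-wise: for indices $i,j$ with $d_H(\phi(x_i),\phi(x_j))\leq d'$, one of the two subwalks of $\phi(W)$ between positions $i$ and $j$ should have diameter at most $D'$. We expect this to be the main obstacle only in the bookkeeping of the constants, not in any real difficulty.

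Take such $i,j$. The lower bound in \ref{quasiisom:1} gives $M^{-1}d_G(x_i,x_j)-A\leq d'$, so $d_G(x_i,x_j)\leq M(d'+A)$. Choosing $d'=\frac dM-A$ makes this exactly $d$; this is where the stated value of $d'$ comes from. By $(d,D)$-injectivity of $W$, one of the two subwalks $W'$ of $W$ between $x_i$ and $x_j$ has $\diam_G(W')\leq D$. Its image is precisely the corresponding subwalk of $\phi(W)$ between positions $i$ and $j$. By the upper bound in \ref{quasiisom:1}, any two of its vertices are at $H$-distance at most $MD+A=D'$. So this subwalk has diameter at most $D'$, as required.

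If $d'\leq 0$ the injectivity condition is vacuous or trivial, so no separate treatment is needed. All the constants match those in the statement, which completes the plan.
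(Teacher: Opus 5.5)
Your proposal is correct and follows the paper's proof essentially step for step. The near-walk property comes from the upper bound in (Q1). Density follows by combining (Q2) with $R$-density, giving $MR+2A$. Injectivity comes from pulling $d_H(\phi(x_i),\phi(x_j))\le d'$ back to $d_G(x_i,x_j)\le M(d'+A)=d$ and pushing the diameter bound $D$ forward to $MD+A$.

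Your index-wise reading of injectivity is a useful clarification: $\phi$ need not be injective, and the paper simply picks preimages $x,z$ on $W$ without comment.
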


Next, we observe that a quasi-dense quasi-injective closed {\bf walk} implies the existence of a quasi-dense quasi-injective {\bf cycle} with appropriately adapted constants: 

\begin{lemma} \label{lem walk cycle}	
The image of any closed $(d,D,R)$-walk $W$ in a graph \G\ contains a\\ (possibly trivial) $(d,D,R+2D)$-cycle.
\end{lemma}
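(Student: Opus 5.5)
Let $W=x_0x_1\ldots x_\ell=x_0$ be a closed $(d,D,R)$-walk. The plan is to shortcut $W$ repeatedly until no vertex repeats. Among all closed walks $W'$ in $G$ obtained from $W$ by a sequence of such shortcuts, pick one of minimal length. It will be a cycle, possibly trivial or degenerate (a single edge traversed back and forth, which we then replace by one of its endpoints). The shortcut operation is this: if some vertex appears twice, $x_i=x_j$ with $i<j$, then $d_G(x_i,x_j)=0\le d$. By $(d,D)$-injectivity, one of the two subwalks of $W$ between $x_i$ and $x_j$ has diameter at most $D$. We delete that subwalk, keeping the other one together with the identification $x_i=x_j$. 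The result is again a closed walk whose vertex set is contained in that of $W$, and it is strictly shorter.

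First, injectivity is inherited. Every subwalk of the shortened walk $W'$ between two of its vertices $x_a,x_b$ is a subwalk of $W$ with some pieces excised. More precisely, its vertex set is contained in the corresponding subwalk of $W$ between the same occurrences. To keep this clean, we track the surviving indices as a subset $I\subseteq\{0,\ldots,\ell\}$ in cyclic order. Since $W'$ visits a cyclic subsequence of the original index positions, the two arcs of $W'$ between $x_a,x_b$ have vertex sets contained in the two arcs of $W$ between them. Diameter is monotone under inclusion, so $W'$ is still $(d,D)$-injective. Iterating, the final cycle $C$ is $(d,D)$-injective.

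Second, density. Every vertex of $W$ that gets deleted lies on a removed subwalk of diameter at most $D$ whose endpoint $x_i=x_j$ survives that step. Thus each deleted vertex is within $D$ of a vertex that survives the current step. The danger is that this survivor is deleted later, so the distances could accumulate over many steps. This is the main obstacle.

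To avoid the accumulation, I would not iterate naively. Instead, consider the final cycle $C$, given by an index set $I$, and its complementary cyclic gaps. Each gap is a maximal run of deleted indices between consecutive surviving indices $a<b$ in $I$ with $x_a=x_b$, or with $x_a,x_b$ adjacent in $C$. In the first case $d(x_a,x_b)=0$, so injectivity of $W$ gives that one of the two arcs of $W$ between $a$ and $b$ has diameter at most $D$. I will argue that this must be the gap arc: the other arc contains all of $C$. If $C$ has diameter larger than $D$, that arc cannot be the short one. If $C$ has diameter at most $D$, then all of $W$ lies within $2D$ or so of $C$ anyway. In the adjacent case the gap is not really a gap, since it was excised between equal vertices. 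Either way, every vertex of $W$ lies within $D$ of $C$, possibly with one extra $D$ in the degenerate small-diameter case. Any vertex of $G$ is within $R$ of $W$, hence within $R+2D$ of $C$. This gives the $(d,D,R+2D)$-cycle. The delicate point to check carefully is the gap analysis when several shortcuts nest, and the trivial-cycle case.
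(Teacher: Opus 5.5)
Your argument that $(d,D)$-injectivity passes to the shortened walk is fine. So is your gap argument when $\diam(C)>D$: each nonempty gap, together with its end positions, is a closed subwalk of $W$ at a vertex of $C$, and its complementary arc contains all of $C$, so injectivity of $W$ forces the gap arc to have diameter at most $D$.

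\textbf{The gap.} The remaining case $\diam(C)\le D$, which includes the trivial-cycle outcome, is not proved. There you only assert that $W$ lies within ``$2D$ or so'' of $C$, and this does not follow from your gap analysis. Once $C$ is small, the complementary arc of a gap may itself be the short arc. Injectivity of $W$ then says nothing about the gap; you only learn that at most one gap is long. For instance, let $W$ run once around a triangle $T$ at a vertex $z$, and then once around a long cycle $Z$ through $z$. This walk is $(d,D)$-injective for $D\ge d+1$. For the index set of $T$, the only gap is $Z$, and its complementary arc $T$ is the short one, so your gap analysis yields nothing, yet $Z$ is arbitrarily far from $C=T$. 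Only the shortcut rule excludes this outcome, since it would excise $T$ and not $Z$. So the proof must use the history of the excisions after all, and you are back at the accumulation problem you wanted to bypass.

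\textbf{The missing observation.} There is no accumulation if every shortcut is defined via arcs of the \emph{original} walk. When current positions $i\neq j$ satisfy $x_i=x_j$, apply injectivity of $W$ itself (not of the current walk) to get an arc of $W$ between positions $i$ and $j$ of diameter at most $D$. Then excise the current positions inside that arc.
\begin{itemize}
\item The endpoints of any later shortcut are current positions, so they lie outside every earlier excised interval. Hence each later arc either contains or misses each earlier excised interval, and the excised intervals are laminar.
\item Every maximal excised interval therefore lies in an arc of $W$ of diameter at most $D$ whose end vertex still lies on the final walk. If that end position is excised later without the interval being swallowed, this happens via a shortcut ending there, which keeps a position carrying the same vertex.
\item So every vertex of $W$ is within $D$ of $C$, with no case distinction on $\diam(C)$, and your gap analysis becomes unnecessary. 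This gives $(R+D)$-density.
\item In the back-and-forth-edge case you keep one endpoint, which costs an extra $1$; this is absorbed since $R+D+1\le R+2D$ once $D\ge 1$.
\end{itemize}

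\textbf{Comparison with the paper.} The paper avoids nesting differently. It takes a maximal edge-disjoint family of closed subwalks of diameter at most $2D$, using that at a repeated vertex all closed subwalks there except one have union of diameter at most $2D$. It then contracts them all at once. Your route, once corrected, gives the slightly better constant $R+D$.
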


Before proving these lemmas let us see how they imply the coarse invariance of \asc\ hamiltonicity:

\begin{proof}[Proof of \Tr{asham qi}] 	
Suppose \seq{G}\ is \asly\ hamiltonian, and \fe\ $d\in \N$,  let \seq{C^d}\ be a sequence of cycles as in \Dr{def HC}. Let $\phi_n: G_m \to H_n$ be a \maqy\ \fe\ $n$, and let $W^d_n:= \phi_n(C^d_m)$. We think of $W^d_n$ as a closed walk in $H_n$. By \Lr{lem walks}, $W^d_n$ is a 
closed, $(d',D')$-injective, $R'$-dense, $(M+A)$-near-walk with the given parameters. We can extend $W^d_n$ into a closed (1-near) walk $Z=Z^d_n$ by replacing any two consecutive vertices by a path of length at most $K:=M+A$ joining them. Clearly, $Z$ is still $R'$ dense in $H$, and we claim that it is 
$(d'-K,D'+K)$-injective. Indeed, suppose 
two vertices $x,z$ of $Z$ satisfy $d_H(x,z)\leq d'-K$
and choose corresponding vertices $x',z'$ with
$d_H(x,x'),d_H(z,z') \leq K/2$, which implies  $d_H(x',z')\leq d'$.


Since $W^d_n$ is $(d',D')$-injective, at least one of its two $x'$-$z'$ subwalks has diameter at most $D'$. It follows that the corresponding subwalk of $Z$ has diameter at most $D'+K$. 

Plugging $Z$ into \Lr{lem walk cycle} now yields a $(d'-K,D'+K,R'+2(D'+K))$-cycle $\cc^d_n$ in $H_n$. Note that $d'-K \to \infty$ as $d \to \infty$, and so the sequences \seq{\cc^d}\ prove that \seq{H}\ is \asly\ hamiltonian.
\end{proof}

It remains to prove the above lemmas: 

\begin{proof}[Proof of \Lr{lem walks}]
Clearly, $W':=\phi(W)$ is a closed $(M+A)$-near-walk.
 \smallskip

For quasi-surjectivity, since $W$ is $R$-dense in \G, and $\phi(G)$ is $A$-dense in $H$, it is straightforward to see that $\phi(W)$ is  $R'$-dense in $H$ for $R'=A+(MR+A)= MR+2A$. 
\smallskip

To check quasi-injectivity, let $x,z$ be vertices visited by $W$, and suppose that $x':=\phi(x),z':=\phi(z)$ satisfy $d_H(x',z')\leq d'$. 
Note that
$$d_G(x,z) \leq M(d_H(x',z')+A) \leq M(d'+A) = M((\frac{d}{M}-A)+A) = d.$$
Since $W$ is $(d,D)$-injective, one of the two corresponding $x$-$z$~subwalks $U$ of $W$ satisfies $\diam_G(U)\leq D$. 
It follows that $\diam_H(\phi(U))\leq MD+A=D'$.
Thus $\phi(W)$ is $(d',D')$-injective as desired.
\end{proof}

\begin{proof}[Proof of \Lr{lem walk cycle}]
A \defi{subwalk} of a closed walk $W=x_0 x_1 \ldots x_k=x_0$ is a sequence $x_i x_{i+1\hspace{-1mm}\pmod k} \ldots x_{i+m\hspace{-1mm}\pmod k}$ for some $i,m\leq k$. (In particular $W'$, can consist of a final subsequence of $W$ followed by an initial subsequence.)

Call a closed subwalk $W'$ of $W$ \defi{removable}, if the diameter of its image is at most $2D$. Note that any vertex $x$ occuring $k>1$ times on $W$ separates $W$ into $k$ closed subwalks, which we call the \defi{$x$-subwalks}. We claim that 
\labtequ{removable}{at most one $x$-subwalk is non-removable, while the union of the other $k-1$ $x$-subwalks is removable.}
To see this, suppose there are two $x$-subwalks $U_1 \neq U_2$, each visiting a vertex $u_1,u_2$ respectively \st\ $d(u_i,x)>D$. Then letting $S:=\{x\}$ witnesses that $W$ is not $(0,D)$-injective, contradicting our assumption that $W$ is a $(d,D,R)$-walk. Thus at most one $x$-subwalk contains a vertex $u$ with $d(u,x)>D$, and therefore the union of the other $k-1$ $x$-subwalks has diameter at most $2D$ by the triangle inequality.

A set $F$ of closed subwalks of $W$ is \defi{good}, if their domains are disjoint except perhaps at their endpoints; more precisely, no two $U\neq U'\in F$ share a subsequence of the form $x_i x_{i+1}$. 
Let $\cf$ be the set of good sets of closed subwalks of $W$. We put a partial order $\leq$ on \cf\ by letting $F \leq F'$ if each element of $F$ is a subwalk of some element of $F'$. 

Pick a maximal $F\in \cf$ \wrt\ $\leq$. We claim that 
%
\labtequ{WF cycle}{$W/F$ is injective,}
hence its image is a (possibly trivial) cycle.
Suppose, to the contrary, $W/F$ contains two  occurences of some vertex $y$, and let $Y_1,Y_2$ be the closed subwalks into which these two occurrences separate $W$ (not $W/F$). By \eqref{removable}, at least one of $Y_1,Y_2$ is removable, and we can assume this is $Y_2$. Note that each $U\in F$ is a subwalk of either $Y_1$ or $Y_2$, because the endpoints of $Y_1,Y_2$ appear in $W/F$. Similarly, we cannot have $U=Y_2$ for any $U\in F$. If some $U\in F$ is a subwalk of $Y_2$, then we can obtain a good set $F'$ from $F$ by adding $Y_2$ and removing all elements of $F$ that are subwalks of $Y_2$. But then $F < F'$, contradicting the maximality of $F$. If, on the other hand, each $U\in F$ is a subwalk of $Y_1$, then $F \cup \{Y_2\}$ is a good set, again contradicting the maximality of $F$. This proves \eqref{WF cycle}.

\medskip
Easily, $W/F$ inherits the $(d,D)$-injectivity of $W$. We claim that it is $(R+2D)$-dense in $G$, and to see  this it suffices to check that any vertex $w$ of $W$ which is not in $W/F$ has distance at most $2D$ from  $W/F$. Indeed, such a $w$ lies inside some $U\in F$, and the endpoint $u$ of $U$ lies in $W/F$ since $F$ is good. But $\diam_G(U)\leq 2D$, and so $d(w,u)\leq 2D$ as desired. To summarize,  $W/F$ is a $(d,D,R+2D)$-cycle. 
\end{proof}

\begin{remark} \label{rem gr}
The proof of \Tr{asham qi} yields the stronger statement that the optimal `growth rates' of the control functions $R,D$ coincide for two \qic\ families $\seq{G},\seq{H}$. For example, if \gseq\ admits linear/polynomial/exponential functions $R,D$, then so does $(H_n)$.
\end{remark}

\section{Stronger versions of coarse hamiltonicity and quasi-cycles}


Call a graph sequence \gseq\ \defi{strongly hamiltonian}, if it has a \hamfam\ as in \Dr{def HC} \st\ $R$ is constant ---equivalently, upper bounded. Call \gseq\ \defi{constantly hamiltonian}, if it has a \hamfam\ $(C_n^d)_{n,d\in \N}$ \st\ $C_n^d$ does not depend on $d$, i.e.\ $C_n^{d_1}=C_n^{d_2}$ \fe\ $n,d_1,d_2\in \N$. We will prove that these two conditions are in fact equivalent, and they are equivalent to \gseq\ being a sequence of quasi-cycles: 

\begin{theorem} \label{thm quacy}
\Tfae\ \fe\ sequence \seq{G}\ of graphs: 
\begin{enumerate}
\item \label{qc i} \gseq\ is constantly hamiltonian;
\item \label{qc ii} \gseq\ is strongly hamiltonian;
\item \label{qc iii} There are $M,A$ \st\ each $G_n$  is \maqic\ to a cycle.
\end{enumerate}
\end{theorem}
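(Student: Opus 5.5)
I would prove the cycle of implications \ref{qc iii} $\Rightarrow$ \ref{qc i} $\Rightarrow$ \ref{qc ii} $\Rightarrow$ \ref{qc iii}.

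For \ref{qc iii} $\Rightarrow$ \ref{qc i}, I would use the proof of \Tr{asham qi} rather than just its statement. A cycle $Z_n$ is itself a $(d,d,0)$-cycle for every $d$, because any two vertices at distance at most $d$ in $Z_n$ are joined by a subpath of diameter at most $d$. Hence one choice of cycle serves for all $d$. Let $\phi_n$ be an \maqy\ from $Z_n$ to $G_n$ (taking a quasi-inverse if needed). By \Lr{lem walks} and the interpolation step in the proof of \Tr{asham qi}, we get a single closed walk $Z$ in $G_n$, independent of $d$, that is $R'$-dense with $R'=2A$. It is also $(d'',D'')$-injective for every $d$, with $d''\to\infty$ and $D''$ depending only on $d,M,A$. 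The delicate point is \Lr{lem walk cycle}, whose output cycle could depend on the chosen maximal $F$, and $F$ depends on $D$ through the notion of removability. I would fix a single $F$ that is good for all parameters simultaneously. To do this I would use the $(0,D(0))$-injectivity only, defining removable via the constant $D(0)$. The resulting cycle $W/F$ is then a subsequence of the walk with the same endpoints structure. For every $d$ it inherits $(d'',D'')$-injectivity, because the subwalks of $W/F$ differ from subwalks of $W$ only by removing pieces, and it is $(R'+2D(0))$-dense. So $C_n$ is independent of $d$.

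\ref{qc i} $\Rightarrow$ \ref{qc ii}: if $C_n^d=C_n$ for all $d$, then $R(d)$ may be replaced by $R(1)$ for every $d$, since density does not depend on $d$. So $R$ is constant.

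\ref{qc ii} $\Rightarrow$ \ref{qc iii} is the main step. Fix $d:=2R+1$ and let $C=C_n^d$ be the corresponding cycle, which is $R$-dense and $(2R+1,D)$-injective. I would show that the inclusion $V(C)\to V(G_n)$ is a quasi-isometry from $C$ with its own path metric to $G_n$. This suffices because $V(C)$ is $R$-dense, and it gives \ref{qc iii} with the cycle $C$. The upper bound $d_G\le d_C$ is trivial. For the lower bound, take $x,y\in C$ and a geodesic $x=v_0,\dots,v_m=y$ in $G_n$, and pick $c_i\in C$ with $d_G(v_i,c_i)\le R$, setting $c_0=x$ and $c_m=y$. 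Consecutive $c_i,c_{i+1}$ are at $G$-distance at most $2R+1=d$, so one of the two $C$-arcs between them has $G$-diameter at most $D$.

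The obstacle is converting this diameter bound into a bound on $C$-length, and keeping the arcs consistent so that their concatenation actually goes from $x$ to $y$ along one side. For the length bound I would choose $d$ large instead, say $d:=\max(2R+1,\,D_0)$ with $D_0=D(2R+1)$. Then I would argue that a subarc of $C$ of $G$-diameter at most $D$ has bounded $C$-length $L(R,D)$. To see this, subdivide the arc into points at $C$-distance $D(d)+1$ apart. By injectivity, any two such points at $G$-distance at most $d$ force the short side to be that subarc segment, which yields a contradiction via repeated application of injectivity, since the arc is much longer than its diameter.

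If this counting argument fails, I would fall back on a direct argument. Define a map from $G_n$ to a cycle of length $|C|/L$, chosen by quotienting $C$ into blocks, and verify \ref{quasiisom:1} using the fact that the small-diameter side of each pair $c_i,c_{i+1}$ is unique once $|C|$ is large compared to $D$. Small cycles of bounded length are handled as trivial cases.
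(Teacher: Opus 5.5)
Your arguments for \ref{qc iii}$\Rightarrow$\ref{qc i} and \ref{qc i}$\Rightarrow$\ref{qc ii} are fine. Fixing the maximal good family $F$ of \Lr{lem walk cycle} through a single $(0,D_\ast)$-injectivity bound is correct, and it handles the dependence of $F$ on $D$ that the paper passes over.

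The gap is in \ref{qc ii}$\Rightarrow$\ref{qc iii}. You claim that a subarc of $C$ of $G$-diameter at most $D$ has $C$-length bounded by some $L(R,D)$. This is false, because $(d,D)$-injectivity controls only the $G$-diameter of the short side, never its length. Here is a counterexample. Let $G_n$ be obtained from the $n$-cycle by replacing each vertex $i$ by a clique $K^i$ on $m$ vertices, and joining $K^i$ completely to $K^{i+1}$ (indices mod $n$). Let $C$ be the Hamilton cycle that runs through $K^1$, then $K^2$, and so on up to $K^n$. Then $C$ is $0$-dense and $(d,\max\{d,1\})$-injective for every $d$, so it may well be the cycle $C_n^{d}$ you are handed. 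Yet the two ends of its $K^1$-segment are adjacent in $G_n$ and at distance $m-1$ along $C$. With $m=m_n\to\infty$, the inclusion $V(C)\to V(G_n)$ is not an $(M,A)$-quasi-isometry for any fixed $M,A$. Your counting argument produces no contradiction here: every pair on that segment is at $G$-distance $1$, and the short side has diameter $1\le D$. The fallback fails for the same reason. We have $|C|=nm$ while $\diam(G_n)=\lfloor n/2\rfloor$, so $G_n$ is not uniformly quasi-isometric to a cycle of length $|C|/L$ for any fixed $L$. Moreover, the small-diameter side of a pair need not be unique however large $|C|$ is (take $n=3$).

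The missing idea is to cut $C$ by $G$-diameter rather than by $C$-length, and to use the number of pieces, not $|C|$, as the length of the target cycle. This is the paper's route.
\begin{enumerate}
\item Greedily split $C$ into consecutive arcs $Q_1,\dots,Q_r$ of $G$-diameter at most $D$, such that any two consecutive arcs have union of diameter more than $D$.
\item Injectivity then gives $d_G(Q_i,Q_j)>d$ whenever $i,j$ are cyclically at least $3$ apart. Otherwise the short side between them would contain two consecutive arcs, and their union would have diameter at most $D$.
\item Merging pairs yields arcs $P_1,\dots,P_r$ of diameter at most $3D$, with non-consecutive ones more than $d$ apart (\Lr{lem intervals}).
\item Send each vertex of $G_n$ to the index of a nearest $P_i$. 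For $d=2R+1$, $R$-density implies that the two ends of any edge go to equal or cyclically adjacent indices, so distances in the $r$-cycle are at most $d_G$.
\item Conversely, vertices with equal or adjacent indices are within $2(R+3D)$ in $G_n$.
\end{enumerate}
This gives a quasi-isometry onto the $r$-cycle with constants depending only on $R$ and $D(2R+1)$.
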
 

It is known that \ref{qc iii} is equivalent to $G_n$ being $(1,A')$-\qic\ to a cycle \cite[\S~4]{GeoPapMin}. 

For the proof of \Tr{thm quacy} we will use the following lemma, which will also be useful in next section.

\begin{lemma}\label{lem intervals}	
Let $C$ be a $(d,D)$-injective cycle in a graph \G. Then $C$ can be partitioned into consecutive, edge-disjoint, subpaths $P_1, \ldots, P_r$, so that $\diam_G(P_i)\leq 3D$, and $d_G(P_i,P_j)>d$ whenever $|i-j|>1\pmod r$. 
\end{lemma}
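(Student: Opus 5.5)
Write $C=x_0x_1\ldots x_{\ell-1}x_0$ and cut it greedily. Let $P_1$ start at $x_0$ and extend it one vertex at a time along $C$ for as long as its $G$-diameter stays at most $D$. Once a further vertex would push the diameter above $D$, close $P_1$ and start $P_2$ at its last vertex, so that consecutive paths share an endpoint and are edge-disjoint. Repeat this around the cycle. The final path $P_r$ wraps back to $x_0$ and may be short, so merge it with $P_{r-1}$ or $P_1$ if necessary.

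Every path then has diameter at most $D+1$. The merged path is a union of two adjacent pieces, each of diameter at most $D+1$ and sharing a vertex, so its diameter is at most $2D+2$. If $D\ge 2$ this is within $3D$. The case $D\le 1$ can be handled separately: such a cycle is quasi-trivial, or one simply absorbs the extra constant. By construction, every non-final path $P_i$ together with its next vertex has diameter greater than $D$. Hence $P_i\cup P_{i+1}$ has diameter greater than $D$ for each non-final $i$.

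The separation property is the heart of the argument. Suppose $|i-j|>1 \pmod r$ and there are $u\in P_i$, $v\in P_j$ with $d_G(u,v)\le d$. By $(d,D)$-injectivity, one of the two $u$--$v$ subpaths of $C$ has diameter at most $D$. Because $P_i$ and $P_j$ are not cyclically adjacent, each of these two subpaths contains an entire intermediate path $P_k$ together with its successor's first vertex. In fact it contains the whole of $P_{i+1}$, or the whole of $P_{j+1}$ on the other side, together with the vertex following it. So each subpath contains a set of diameter greater than $D$, which is a contradiction.

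The main obstacle is index bookkeeping in this last step. I need to check that between $P_i$ and $P_j$ on each side there is at least one full piece plus one extra vertex whose diameter already exceeds $D$. I would strengthen the greedy rule so that each piece $P_k$ alone, and not just $P_k$ together with the next vertex, has diameter exceeding $D$ whenever possible. Concretely, I would close $P_k$ at the first vertex $y$ such that $\diam_G(P_k)>D$. Then each non-final piece has diameter between $D+1$ and $2D+1$: the piece minus $y$ has diameter at most $D$, and adding a neighbour increases the diameter by at most 1. The leftover final piece has diameter at most $D$ and is merged with its neighbour, giving diameter at most $3D+1$. I would absorb the $+1$ by running the construction with threshold $D-1$, or else accept the loose constant; any constant multiple of $D$ suffices for the later uses.

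With this rule, every path from $P_i$ to $P_j$ along $C$ between non-adjacent indices contains some complete non-final piece, or the merged piece, which has diameter greater than $D$. This contradicts injectivity. If $r\le 3$, the condition is vacuous.
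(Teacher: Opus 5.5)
Your final construction, which closes $P_k$ at the first vertex where its diameter exceeds $D$ and merges the short leftover, does prove the lemma. Two points in the write-up need fixing.

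\textbf{The first greedy rule must be abandoned.} The separation claim in your third paragraph is false for that rule, and this is not merely an index-bookkeeping issue. If the pieces have diameter at most $D$, then the endpoints of $P_{i+1}$ lie in $P_i$ and in $P_{i+2}$, so $d_G(P_i,P_{i+2})\le \diam_G(P_{i+1})\le D$. The required separation therefore fails whenever $d\ge D$. Concretely, take $u$ to be the last vertex of $P_i$ and $v$ the first vertex of $P_{i+2}$: the forward $u$--$v$ arc is exactly $P_{i+1}$, and it does not contain ``the vertex following it''. So switching to pieces of diameter $>D$ is necessary, not optional.

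\textbf{The threshold-$(D-1)$ repair would break the argument.} The contradiction with $(d,D)$-injectivity needs every piece to have diameter strictly greater than $D$. A piece of diameter exactly $D$ can form an entire arc of diameter at most $D$, so no contradiction arises. No repair is needed. By your own remark that adding a neighbour raises the diameter by at most $1$, each non-final piece has diameter exactly $D+1$, not merely at most $2D+1$. Hence the merged piece has diameter at most $D+(D+1)=2D+1\le 3D$ for every $D\ge 1$, and the case $D=1$ needs no separate treatment.

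\textbf{Comparison with the paper.} The paper argues in two stages. It first cuts $C$ into pieces $Q_1,\ldots,Q_r$ of diameter at most $D$ such that any two cyclically consecutive pieces have union of diameter greater than $D$. Injectivity then gives $d_G(Q_i,Q_j)>d$ whenever $i,j$ are at cyclic distance at least $3$. Finally it groups consecutive $Q_i$'s in pairs, with one triple when $r$ is odd, and this grouping is where the bound $3D$ comes from. Your one-pass construction produces pieces that are already ``just over $D$''. Each arc between non-adjacent pieces then contains a whole piece of diameter $>D$, which skips the pairing and parity step and gives the sharper bound $2D+1$. Your merging of the leftover also handles the wrap-around explicitly, which the paper glosses over. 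The underlying mechanism is the same in both arguments: each arc between non-adjacent pieces contains a set of diameter $>D$, contradicting injectivity.
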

\begin{proof} 
First, we partition $C$ into consecutive, edge-disjoint, subpaths $Q_1, \ldots, Q_r$, so that $\diam_G(Q_i)\leq D$, but joining any two consequtive ---modulo $r$--- $Q_i's$ results in a path of diameter larger than $D$; such a partition can be easily constructed greedily. Note that 
\labtequ{QiQj}{$d_G(Q_i,Q_j)> d$ holds whenever $|i-j|\geq 3 \hspace{-2mm}\pmod r$,}
because otherwise the $(d,D)$-injectivity of $C$ says that one of the two segments of $C$ between $Q_i,Q_j$ has diameter at most $D$, and in particular there is a pair $Q_{\ell} \neq Q_{\ell+1 \hspace{-1mm} \pmod r}$ the union of which has diameter at most $D$, a contradiction.

\medskip
If $r\leq 3$ we can just let $P_i:=Q_i$ and we are done. Otherwise, if $r$ is even, we join the $Q_i$'s in consecutive pairs to form a new partition $P_1, \ldots, P_{r/2}$ of $C$. If $r$ is odd we do the same, except that we start with $P_1= Q_1 \cup Q_2 \cup Q_3$. Easily, \eqref{QiQj} implies that $d_G(P_i,P_j)>d$ whenever $|i-j|>1\pmod r$. Since  $diam_G(Q_j)\leq D$, we have $diam_G(P_i)\leq 3D$ \fe\ $i$. 
\end{proof}

\begin{proof} 	[Proof of \Tr{thm quacy}]
For the implication \impl{qc iii}{qc i}, let $\phi_n: C_m \to G_n$ be a \maqy, where $C_m$ is a cycle. Plug $\phi_n$ into \Lr{lem walks}, with $W=W_n$ being the `identity' closed walk on $C_m$, noting that $W$ is a $(d,d,0)$-walk \fe\ $d$. Then apply \Lr{lem walk cycle} on $\phi_n(W_n)$ to obtain the desired cycles $C_n\subset G_n$. Since $W_n$ is independent of $d$, so is $C_n$, and it is a \hamfam\ for \gseq\ \wrt\ the functions $D,R$ provided by those lemmas.

\medskip
The implication \impl{qc i}{qc ii} is immediate, since each $C^d_n=C^1_n$ must be $R(1)$-dense.

\medskip
To prove \impl{qc ii}{qc iii} let $(C_n^d)_{n,d\in \N}$ be a \hamfam\ for \gseq\ \wrt\ functions $R,D$, with $R(d) \equiv R$ constant. Let $d_0:= 2R+1$, and $D_0:= D(d_0)$. Given $\nin$, let $C:=C_n^{d_0}$. Thus $C$ is an $R$-dense $(d_0,D_0)$-injective cycle of $G_n$. We apply \Lr{lem intervals} to partition $C$ into  subpaths $P_1, \ldots, P_r$, so that $\diam_{G_n}(P_i)\leq 3D_0$, and $d_{G_n}(P_i,P_j)>d_0$ whenever $|i-j|>1\pmod r$. 
%
%

Let $\cv=V_1, \ldots, V_r$ be a Voronoi decomposition of $G_n$ \wrt\ the $P_i$, i.e.\ each $V_i$ consists of a set of vertices $v\in V(G_n)$ \st\ $d(v,P_i)\leq d(v,P_j)$ \fe\ $j\in [r]$, and $\dot{\bigcup}_{i\in [r]} V_i=V(G_n)$. Let $\phi: V(G_n) \to [r]$ map each vertex $v$ to the index $i$ \st\ $v\in V_i$. We claim that $\phi$ ---which is surjective, as each $v\in P_i$ is mapped to $i$--- is a \qy\ from $G_n$ to the canonical $r$-cycle $K$ on $[r]$. To begin with, note that \fe\ $u\in V(G_n)$ we have 
\labtequ{duPi}{$d_G(u, P_{\phi(u)})\leq R$,}
since $C$ is $R$-dense in $G_n$. 
Therefore, for every $uv\in E(G_n)$, we have\\  $d_G(P_{\phi(u)}, P_{\phi(v)})\leq 2R+1=d_0$ by the triangle inequality. From this we deduce \\ 
$|\phi(u)-\phi(v)|\leq 1 \hspace{-1mm}\pmod r$ by the choice of the $P_i$'s,  
in other words, $d_K(\phi(u),\phi(v)) \leq 1$. Applying this to each edge of a shortest \pth{x}{y}\ for any $x,y\in V(G_n)$ we thus obtain 
$$d_K(\phi(x),\phi(y)) \leq d_G(x,y).$$

Note that the only property of the relation between $d_0$ and $R$ that we are using is that $2R+1\leq d_0$; we will use this in \Cr{cor lin} below.
%
%

On the other hand, if $x,y\in V(G_n)$ satisfy $d_K(\phi(x),\phi(y)) \leq 1$, then $d_G(x,y) \leq 2(R+3D_0)=:M$ holds by \eqref{duPi} and the fact that $\diam_G(P_i)\leq 3D_0$. It follows that 
$$d_G(x,y) \leq M (d_K(\phi(x),\phi(y))+1).$$

These two inequalities mean that $\phi$ is an $(M,M)$-\qy\ assuming $M\geq 1$. \mymargin{this breaks the infinitesimal version}
\end{proof}

The above proof has the following consequence saying that the control function $R$ of any \asham\ \gseq\ must grow at least linearly unless \gseq\ satisfies \ref{qc iii}: 

\begin{corollary} \label{cor lin}
Suppose $(G_n)$  admits a \hamfam\ 
with control functions $D,R$ \st\ $R(d)< cd$ for some constant $c<1/2$. Then $(G_n)$ is quasi-isometric to a sequence of cycles. 
\end{corollary}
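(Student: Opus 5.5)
The plan is to rerun the proof of the implication \impl{qc ii}{qc iii} of \Tr{thm quacy}, choosing a single scale $d_0$ so that the cycle $C_n^{d_0}$ is $R(d_0)$-dense with $2R(d_0)+1\leq d_0$. As remarked in that proof, this inequality is the only relation between $d_0$ and $R$ that the argument uses.

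First we pick $d_0$. The hypothesis $R(d)<cd$ with $c<1/2$ gives $2R(d)+1< 2cd+1$. Since $2c<1$, we have $2cd+1\leq d$ once $d\geq 1/(1-2c)$. So we fix an integer $d_0\geq 1/(1-2c)$, and then $2R(d_0)+1\leq d_0$. We set $R_0:=R(d_0)$ and $D_0:=D(d_0)$; both are constants independent of $n$.

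Next, for each $n$ we take $C:=C_n^{d_0}$. It is an $R_0$-dense, $(d_0,D_0)$-injective cycle in $G_n$. We apply \Lr{lem intervals} to partition $C$ into subpaths $P_1,\ldots,P_r$ with $\diam(P_i)\leq 3D_0$ and $d(P_i,P_j)>d_0$ whenever $|i-j|>1 \pmod r$. We then form the Voronoi map $\phi\colon V(G_n)\to [r]$ to the $r$-cycle $K$, exactly as in the proof of \Tr{thm quacy}.

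The quasi-isometry estimates then go through verbatim.
\begin{itemize}
\item Every vertex $u$ satisfies $d(u,P_{\phi(u)})\leq R_0$, by density of $C$.
\item For an edge $uv$, the triangle inequality gives $d(P_{\phi(u)},P_{\phi(v)})\leq 2R_0+1\leq d_0$. By the separation property of the $P_i$ this forces $d_K(\phi(u),\phi(v))\leq 1$, so $d_K(\phi(x),\phi(y))\leq d_G(x,y)$ for all $x,y$.
\item In the other direction, $d_G(x,y)\leq M\,(d_K(\phi(x),\phi(y))+1)$ with $M:=\max\{1,\,2(R_0+3D_0)\}$.
\item $\phi$ is surjective, since each vertex of $P_i$ maps to $i$.
\end{itemize}
Hence $\phi$ is an $(M,M)$-quasi-isometry, with $M$ independent of $n$.

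The only genuinely new point is the choice of $d_0$ in the first step, and it is routine. One caveat needs checking: when $r\leq 3$, or when $C$ is a trivial cycle, $K$ degenerates, and we should confirm the statement still holds with the paper's convention that a single vertex counts as a cycle. In that case $G_n$ has diameter bounded by $2(R_0+3D_0)$, so it is trivially quasi-isometric, with the same constants, to a one-vertex cycle.
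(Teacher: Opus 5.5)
Your proposal is correct and is essentially the paper's proof. You choose a single scale $d_0$ with $2R(d_0)+1\leq d_0$ (possible since $c<1/2$) and rerun the Voronoi-partition argument from the implication (ii)$\Rightarrow$(iii) of Theorem~\ref{thm quacy}, which uses only this inequality; your explicit treatment of the degenerate case (trivial $C$ or small $r$, where $G_n$ has diameter at most $2(R_0+3D_0)$) is a small point of care that the paper leaves implicit.
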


\begin{proof}[Proof of \Cr{cor lin}]
As noted in the proof of the implication \impl{qc ii}{qc iii}, the construction of the \qy\ there remains valid as long as $2R+1 \leq d_0$. Thus choosing large enough $d$ that $2R(d) +1 \leq d$ proves our statement. (In fact, it suffices if there is just one value of $d$ \st\ $2R(d)+1 \leq d$.)
\end{proof}

\comment{
	Before proving this let us recall the definition of asymptotic dimension; see \cite{BelDraAsy,GeoPapMin,GroAsyInv} for the motivation of this important concept. 

Let \gseq\ be a family of graphs, and let \cu\ be a \defi{cover}, i.e.\ a collection of subgraphs of the $G_n$ with $\bigcup \cu = \bigcup_\nin G_n$. For $s > 0$, we say that \cu\ is \defi{$s$-disjoint}, if
$d(U,U') := \inf\ \{d(x, x') \mid x \in U, x' \in U'\} \geq s$
whenever $U,U'\in \cu$ are distinct. More generally, we say that \cu\ is \defi{$(n+1,s)$-disjoint}, if $\cu = \bigcup_{i=1}^{n+1} \cu_i$ for subcollections $\cu_i$ each of which is $s$-disjoint. (We can think of the indices $1,\ldots,n+1$ as the \defi{colours} of \cu.) We define the \defi{asymptotic dimension} of \gseq\ 
as the smallest $n$ \st\ \fe\ $s\in \R_+$ \ti\ an $(n+1,s)$-disjoint cover \cu\ of $X$ with $\sup_{U\in \cu} \diam(U)< \infty$. 

\begin{proof}[Proof of \Cr{cor asdim}]
Given any $d_0\in \N$, we follow the lines of the proof of the implication  \impl{qc ii}{qc iii} of \Tr{thm quacy}, to define the Voronoi decomposition $\cv^{d_0}=V_1, \ldots, V_r$ of each $G_n$. We obtain a $(2,d')$-disjoint cover of $\bigcup G_n$ by colouring each $V_i$ red if $i$ is odd, and blue if $i$ is even, where $d'$ is a linear function of $d_0$ defined below. More precisely we let $\cu_1$ be the union of all red components of all $G_n$'s, and $\cu_2$ all the blue ones. Recall that we had $d_{G_n}(P_i,P_j)>d_0$, and combining this with \eqref{duPi} and the triangle inequality implies that $d_{G_n}(V_i,V_j)>d_0-2R(d_0) > (1-2c)d_0=:d'$. In other words, each of $\cu_1,\cu_2$ is $d'$-disjoint. On the other hand, we had $\diam_{G_n}(P_i)\leq 3D(d_0)$, and combining this with \eqref{duPi} implies that each monochromatic component has diameter at most $2(3D(d_0)+2R(d_0))$ (where the factor 2 is due to the fact that $V_1$ and $V_r$ have the same colour when $r$ is odd, and so their union forms a monochromatic component). Since this expression is finite, the family of covers $\cu^{d_0}, d_0\in \N= \cu_1^{d_0} \cup  \cu_2^{d_0}$ prove that \gseq\ has asymptotic dimension at most 1.
\end{proof}
}

\section{Asymptotic  hamiltonicity implies asymptotic 1-toughness}

As mentioned in the Introduction, our asymptotic notions obey the same numerical relationships between them as their classical analogs; we now prove the last such property remaining:    
\begin{proof}[Proof of \Prr{tough 1}] Suppose not, and fix a $d$ violating 1-toughness. 	
Pick $(d,D,R)$-cycles $C_i\subseteq G_i$, which exist since \gseq\ is \asly\ hamiltonian. Since $d$ violates 1-toughness, we can, for arbitrarily large $D'$, find a separator $S= S_1 \cup \ldots \cup S_k$ in some $G_i$, where $\diam(S_i)\leq d$, \st\ $G_i-S$ has more than $k$ components  
 not contained in $B_{D'}(S)$. We choose $D':= R+D$, and pick $k+1$ of these components $K_1, \ldots, K_{k+1}$, and in each $K_j$ we pick a vertex $x_j$ with $d(x_j, S)>D'$. Since $C_i$ is $R$-dense in $G_i$, we can find $x'_j\in V(C_i)$ \st\ $d(x'_j, x_j) \leq R$ \fe\ $j \in [k+1]$. Note that  $d(x'_j,S)>D'-R=D$ by the triangle inequality. Moreover, $x'_j\in K_j$, because a shortest \pth{x_j}{x'_j}\  avoids $S$ as it has length at most $R$. Thus $S$ separates $x'_j, x'_\ell$ \fe\ $j\neq \ell \in [k+1]$. It follows that $S$ meets all of the $k+1$ subpaths of $C_i \sm \{x'_j \mid j\in [k+1]\}$, and therefore some $S_m, m \in [k]$, must meet at least two of  these subpaths $P_1,P_2$. But then $S_m$ witnesses the fact that $C_i$ is not $(d,D)$-injective, because it interlaces $S_m$ with the endvertices of $P_1$. This contradiction proves that \gseq\ is \asly\ 1-tough.
\end{proof}

\section{Boundedly generated cycle space implies infinite \asc\ connectivity} \label{sec kgen} 

In this section we prove the results announced in \Sr{intro lssc}.

Let $(X,d)$ be a metric space. We say that $S\subseteq X$ is \defi{$M$-near-connected} for $M\in \R_+$, if for every $x,y\in S$, there is a sequence $x=x_0,x_1, \ldots, x_k=y$ of points of $S$ such that $d(x_i,x_{i+1})\leq M$ \fe\ $i<k$. 

For the proof of \Prr{kinfty} we will need the following useful lemma of \cite{TimarCutsets}, which is also implicit in \cite{BaBeCut}: 

\begin{lemma}[{\cite[Theorem~5.1]{TimarCutsets}, \cite[Theorem~1]{BaBeCut}}]
\label{lem:AH-minimal-separator}
Let $G$ be a $k$-generated graph.
Let  $S\subset V(G)$ be an inclusion-minimal set separating $u,v\in V(G)$. Then $S$ is $k/2$-near-connected. 
\end{lemma}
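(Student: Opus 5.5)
The plan is a parity argument in the cycle space: assuming $S$ is not $k/2$-near-connected, we build one cycle that crosses a suitable edge set an odd number of times, while every generating cycle of length at most $k$ crosses it an even number of times.

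\emph{Setting up.} Let $C_u$ and $C_v$ be the components of $G-S$ containing $u$ and $v$. By inclusion-minimality of $S$, every $s\in S$ has a neighbour in $C_u$ and a neighbour in $C_v$; otherwise $S\sm\{s\}$ would still separate $u$ from $v$. (If $u$ or $v$ lies in $S$, a minimal separator is just that vertex, which is trivially near-connected, so we may assume $u,v\notin S$.) Suppose, for a contradiction, that $S$ is not $k/2$-near-connected. Then $S=S_1\,\dot\cup\, S_2$ with both parts nonempty and $d_G(S_1,S_2)>k/2$: take $S_1$ to be the set of vertices reachable from some fixed vertex of $S$ by $k/2$-steps inside $S$.

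\emph{A witness cycle.} Pick $s_1\in S_1$ and $s_2\in S_2$. Since both have neighbours in the connected set $C_u$, there is an $s_1$--$s_2$ path $P$ whose interior lies in $C_u$. Likewise there is an $s_2$--$s_1$ path $Q$ whose interior lies in $C_v$. Then $Z:=P\cup Q$ is a cycle meeting $S$ exactly in $\{s_1,s_2\}$. Let $F$ be the set of edges between $C_u$ and $S_1$, and let $F_2$ be the set of edges between $C_u$ and $S_2$. Since $N(C_u)\subseteq S$, the edge cut $\delta(C_u)$ equals $F\cup F_2$. The cycle $Z$ uses exactly one edge of $F$, namely the first edge of $P$, so $|E(Z)\cap F|$ is odd.

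\emph{Parity on the generators.} The map $Y\mapsto |E(Y)\cap F| \bmod 2$ is linear on the cycle space over $\Z_2$. Let $Y$ be a cycle of length at most $k$. Any two of its vertices are at distance at most $\lfloor k/2\rfloor$ in $G$, so $Y$ cannot meet both $S_1$ and $S_2$, because $d_G(S_1,S_2)>k/2$.
\begin{itemize}
\item If $Y$ avoids $S_1$, then $E(Y)\cap F=\emptyset$.
\item If $Y$ avoids $S_2$, then $E(Y)\cap F=E(Y)\cap\delta(C_u)$, which has even size because every cycle meets every edge cut evenly.
\end{itemize}
In both cases $|E(Y)\cap F|$ is even. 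Since $G$ is $k$-generated, $Z$ is a $\Z_2$-sum of such cycles $Y$, so $|E(Z)\cap F|$ would be even, contradicting the previous step. Hence $S$ is $k/2$-near-connected.

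The main point to get right is the choice of $F$. The natural cut $\delta(C_u\cup S_1)$ is crossed twice by $Z$, so it gives no contradiction. Using the "half-cut" $F=E(C_u,S_1)$ instead makes $Z$ odd. The distance hypothesis $d_G(S_1,S_2)>k/2$, together with $\diam_G(Y)\le\lfloor k/2\rfloor$ for short cycles, is exactly what makes every generator even on $F$.
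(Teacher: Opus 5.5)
Your proof is correct: the half-cut $F=E(C_u,S_1)$ is crossed exactly once by $Z$, whereas every cycle of length at most $k$ misses $S_1$ or $S_2$, and so it meets $F$ either not at all or in its even intersection with the cut $\delta(C_u)$. The paper gives no proof of its own and only cites Tim\'ar and Babson--Benjamini; your argument is the standard $\mathbb{Z}_2$ cycle-space (cocycle) parity argument behind those results, adapted to vertex separators via this half-cut.
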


\comment{
Using this, we will prove

\begin{lemma}[{\cite[Theorem~5.1]{TimarCutsets}, \cite[Theorem~1]{BaBeCut}}]
\label{lem n to one}
Let $G$ be a $k$-generated graph, and $u,v\in V(G)$. If $u,v$ can be separated by an $(n-1,d)$-bounded set, then they can be separated by an $(1,d)$-bounded set
\end{lemma}
}

\begin{proof}[Proof of \Prr{kinfty}]
Fix $n\geq 2$ and $d\in\mathbb N$, and let $L:=n(d+k)$. Since $\gseq$ is asymptotically $2$-connected, there exists $D=D(L)$ such that each $G=G_n$ is $(2,L,D)$-inseparable. We claim that $G$ is $(n,d,D)$-inseparable.

Suppose, for a contradiction, that there is a separator
$S=S_1\cup\cdots\cup S_m$, where $m\leq n-1$, that separates vertices $u,v\in V(G)$ both of which are further than $D$ from $S$, and
$\diam_G(S_i)\leq d$ for every $i$.
We may assume that $S$ is inclusion-minimal with the above properties. 
By Lemma~\ref{lem:AH-minimal-separator}, $S$ is  $k/2$-near-connected. It follows that the auxilliary graph $H$ on $[m]$ in which $i$ and $j$ are adjacent whenever
$d_G(S_i,S_j)\leq k/2$, is connected.

We claim that $\diam_G(S)< L$, which contradicts our assumption that $G$ is $(2,L,D)$-inseparable. 

To prove our claim, given $a,b\in S$, choose $i,j\in [m]$ with $a\in S_i$ and $b\in S_j$, and a path 
$i=i_1,\ldots,i_r=j$ in $H$. We have $r\leq m \leq n-1$. By replacing the edges of this path $P$ with paths in \g with length at most $k/2$, and connecting them up using shortest paths (of lengths at most $d$) within each $S_{i_\ell}$, we obtain an \pth{a}{b}\ in \g of length at most $$ (r+1)d+rk/2 \leq nd+(n-1)k/2 < L.$$

We have thus proved that $G$ is $(n,d,D)$-inseparable, and since $n,d$ were arbitrary, that $\kappa_\infty(G)= \infty$.
\end{proof}

Next, we prove that 1-ended vertex-transitive graphs have infinite asymptotic connectivity: 

\begin{proof}[Proof of \Prr{onended}]
For the forward implication, note first that if \g contains a pair of rays $R,L$ that can be separated by a finite vertex set $S$, then $G$ is not $(2,\diam(S),D)$-inseparable for any $D$, as the two components containing $R$ and $L$ have infinite diameters. Thus $\kappa_\infty(G)\leq 1$ in this case. It is well-known \cite[Proposition 5.6]{ThoHad} that if \g does not contain infinitely many pairwise disjoint rays, then it must contain a separator $S$ as above, and so the forward implication is complete. 
\medskip

For the backward implication, suppose \g contains a family \cgr\ of rays as in the statement, but $\kappa_\infty(G) < \infty$. This means that there are $k,d\in \N$ \st\ \g is not  $(k,d,D)$-inseparable for every $D\in \N$. Pick $(k-1,d)$-bounded separators $S_D$ witnessing this. Note that since \g is regular, there is $n< \infty$ \st\ $|S_D| < n$ \fe\ $D$. Since $S_D$ is finite, one of the components of $G - S_D$ meets all elements of \cgr, while at least one other component  $C_D$ contains no infinite subray, while $\diam(C_D)>D$. Pick a vertex $r$ in $C_D$ maximizing $D':=d_G(r,S_D)$. Easily, $D'\to \infty$ as $D\to \infty$. Since \g is vertex-transitive, we may assume (by translating $S_D$) that $r$ is a fixed vertex independent of $D$. Let $D$ be large enough that $B_{D'-1}(r)$ meets at least $n$ rays in \cgr, which exists since $r$ and \cgr\ are fixed and the latter is infinite. We obtain a contradiction, as $S$ must meet all of these disjoint rays, while $|S_D| < n$. 
\end{proof}

\section{Cartesian products} \label{sec cart}

Let $G=(V,E)$ and $H=(V,E)$ be graphs. Their \defi{Cartesian product $G\square H$} is the graph with vertex set $V \times V'$
in which two distinct vertices $(g,h)$ and $(g',h')$ are adjacent if and only if
\begin{equation}
 \bigl(g=g' \text{ and } hh'\in E(H)\bigr)
 \quad\text{or}\quad
 \bigl(h=h' \text{ and } gg'\in E(G)\bigr).
\end{equation}

The aim of this section is to prove the following result, announced in \Sr{sec main OP}: 

\begin{theorem} \label{prod ham}
Let $(G_n)$ and $(H_n)$ be asymptotically Hamiltonian sequences of finite graphs. Then their Cartesian products $(G_n\square H_n)$ form an \asham\ sequence. Moreover, the control functions of $(G_n\square H_n)$ depend linearly on those of $(G_n),(H_n)$.
\end{theorem}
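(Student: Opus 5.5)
Given $d$, fix a $(d,D,R)$-cycle $C=C^d_n$ in $G_n$ and a $(d,D',R')$-cycle $C'$ in $H_n$; write $D,R$ for the larger of the two sets of control values. The idea is to imitate the classical proof that the product of two Hamilton cycles (a torus grid $C_r\square C_s$) is hamiltonian, but to do it at a coarse scale. Since only $d_{G\square H}=d_G+d_H$ matters, we may pass freely between walks and near-walks, as in the proof of \Tr{asham qi}. It therefore suffices to build a closed walk in $G_n\square H_n$ that is $O(R)$-dense and $(d,O(D))$-injective, and then invoke \Lr{lem walk cycle} to extract a cycle.

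First apply \Lr{lem intervals} to $C$ and $C'$. This partitions $C$ into consecutive blocks $P_1,\dots,P_r$ and $C'$ into $Q_1,\dots,Q_s$, with all block diameters at most $3D$ and non-adjacent blocks more than $d$ apart. The products $P_i\square Q_j$ are then boxes of diameter at most $6D$, arranged like the vertices of the torus $C_r\square C_s$. Two boxes $P_i\square Q_j$ and $P_{i'}\square Q_{j'}$ are within distance $d$ in $G\square H$ only if $d_G(P_i,P_{i'})\le d$ and $d_H(Q_j,Q_{j'})\le d$, which forces $|i-i'|\le1$ and $|j-j'|\le1$ cyclically. So the $d$-proximity graph on boxes is contained in the torus graph with diagonals, and in particular its structure is coarsely controlled.

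Next choose a coarse Hamilton-type closed walk on the torus $C_r\square C_s$ that is "locally injective" in the following sense: whenever two of its positions sit in grid-adjacent cells, one of the two arcs between them stays inside a bounded region of the grid. The naive snake (boustrophedon) is not good enough. Consecutive rows are adjacent in the grid while being far apart along the walk, and the arc joining them wraps around the whole row. This is exactly why the Hilbert curve appears in \Er{ex Hilbert}.

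So instead I would use a second scale. Group the blocks $P_i$ into super-blocks of a fixed number of consecutive blocks, and similarly for the $Q_j$. Then walk within one \emph{column strip} $C\times Q_j$ by the following pattern. Traverse $C$ once in $G$ while sitting at a vertex of $Q_j$, with a small zig-zag in the $H$-direction of amplitude $\operatorname{diam}(Q_j)\le 3D$, so that every vertex of $C\times Q_j$ is within $O(D)$ of the walk. Then step to $Q_{j+1}$ and continue. This gives one long spiral $C\times C'$ wrapping around the torus. Density is easy: any $(g,h)$ is within $R$ of some $(c,c')\in C\times C'$, which lies within $O(D)$ of the walk.

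The main obstacle is injectivity. Take two walk positions at distance at most $d$. If they lie in the same or adjacent column strips $j,j\pm1$ and at nearby $C$-positions, the walk segment between them either stays local (good), or goes all the way around $C$ once. In the latter case its diameter is large, so the plain spiral fails. To fix this I would borrow the torus Hamilton cycle for even $r$: alternate direction (traverse $C$ forward in strip $j$, backward in strip $j+1$), and close up with a return path along a "seam" column $P_1\times C'$. Then a pair in strips $j$ and $j+1$ at $C$-position $i$ is joined by the short turnaround arc only if $i$ is near the turning end, and otherwise the arc has diameter about $\operatorname{diam}(C)$. So a single alternating snake still fails. The remedy I would try is recursive, Hilbert-like: use the $2\times2$ box structure of \Lr{lem intervals} to build a curve on the $r\times s$ torus of boxes whose arc between any two grid-adjacent cells has bounded grid-diameter, which is impossible for the whole torus only up to a bounded number of seams. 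Alternatively, treat the product as a coarse square/annulus and use $O(\log)$ scales, which breaks linear control. Deciding which genuinely yields $O(D)$ injectivity, in light of \Cr{cor lin} and the Hilbert example where $D$ grows linearly in $d$ only, is where I expect the real work. I would first test the construction where, in each strip, $C$ is traversed forward and the return is made through the neighbouring strip, so that adjacent strips are always paired by a short arc.
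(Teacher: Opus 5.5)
\textbf{There is a genuine gap: the proposal does not reach a construction.} The spiral and the alternating snake are both rejected. The remedies you list, a Hilbert-type recursion on the torus of boxes or $O(\log)$ many scales, are not carried out, and the second one admittedly loses linear control.

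The obstacle you hit is self-inflicted. The zig-zag of amplitude $\diam(Q_j)$ is not needed for density. Every vertex of $C\times Q_j$ is already within $\diam_{H_n}(Q_j)\le 3D$ of $C\times\{b\}$ for any single $b\in Q_j$. It is exactly this filling of the boxes $P_i\square Q_j$ that creates $d$-close pairs in neighbouring strips whose connecting arcs are long. The Hilbert-curve analogy is misleading here. Density is only required at scale $O(D)$, which is the size of a box, so it suffices to visit one corner of each box, and no second scale is needed.

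\textbf{The missing observation.} Let $V_C$ and $V_B$ be the sets of endpoints of the intervals of $C$ and $C'$ given by \Lr{lem intervals}.
\begin{enumerate}
\item If $r\ge 4$, two distinct points of $V_C$ lie in non-consecutive intervals, so they are more than $d$ apart in $G_n$. The same holds for $V_B$ if $s\ge 4$.
\item Restrict the walk to the \emph{wire-frame} formed by the fibres $P_i\times\{b\}$ with $b\in V_B$ and $\{c\}\times Q_j$ with $c\in V_C$. This is a subdivision of the torus $Z\cong C_r\square C_s$ on $V_C\times V_B$.
\item Since $d_{G_n\square H_n}=d_{G_n}+d_{H_n}$, distinct rows $C\times\{b\}$ and $C\times\{b'\}$ of the wire-frame are more than $d$ apart, and so are distinct columns.
\item Hence if two points on the fibres of a cycle $K$ of $Z$ are within distance $d$, their fibres coincide or meet at a vertex of $Z$. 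For two horizontal fibres, the separation of $V_B$ forces the same row, and \Lr{lem intervals} forces the same or consecutive $P_i$. For $P_i\times\{b\}$ and $\{c\}\times Q_j$, \Lr{lem intervals} forces $c$ to be an endpoint of $P_i$ and $b$ an endpoint of $Q_j$.
\item Since $K$ has degree $2$ at that vertex, the two fibres are consecutive on $K$.
\end{enumerate}

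Consequently \emph{every} Hamilton cycle of $Z$ works, for instance the boustrophedon if $r$ or $s$ is even. It yields a cycle of $G_n\square H_n$ that is $(d,6D)$-injective. It is $(6D+2R)$-dense, because it passes through a corner of every box and $C\times C'$ is $2R$-dense.

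In particular, your objection to the boustrophedon disappears: adjacent rows of the wire-frame are never $d$-close, so no injectivity condition links them. Your plain spiral fails even without the zig-zag, because each lap of $C$ closes up at its starting column, so as a walk in $Z$ it passes through a vertex twice.

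The degenerate cases $r\le 3$ or $s\le 3$ are easy. There $C$ or $C'$ has diameter at most $9D$, and you take $\{c\}\times C'$ or $C\times\{b\}$ respectively. This is the paper's argument, and it gives linear control functions directly.
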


\begin{proof}
Let $(C_n^d)_{n,d\in \N}$ be a \hamfam\ 
 of \gseq, and $(B_n^d)_{n,d\in \N}$ be a \hamfam\ 
 of $(H_n)$, with control functions $R_G,D_G$ and $R_H,D_H$ respectively. We are going to construct a $(d,D',R')$-cycle $K_n^d$ in $X_n:= G_n\square H_n$, which defines a \hamfam\ of $(G_n\square H_n)$.

To do so, given $d\in \N$, apply \Lr{lem intervals} to both $C:= C_n^d$ and $B:= B_n^d$ to decompose them into subpaths $P_1, \ldots, P_r$ and $Q_1, \ldots, Q_k$ respectively, called the \defi{intervals} of $C,B$. Thus we have $\diam_{G_n}(P_i)\leq 3D_G(d)$ and $\diam_{H_n}(Q_i)\leq 3D_H(d)$ \fe\ $i$, while non-consecutive intervals have distance more than $d$. Let $V_C$ denote the set of endpoints of the $P_i$'s, and note that $d_{G_n}(x,y)>d$ \fe\ $x,y\in V_C$ by the latter property. Similarly, let $V_B$ denote the set of endpoints of the $Q_i$'s.

We are going to construct the desired cycle $K_n$ within $C \square B$. Note that there are $|B|$ copies of each $P_i$ in $C \square B$, which we call the fibers of $P_i$. We define the $|C|$ fibers of each $Q_j$ similarly. Form an auxiliary (toroidal) graph $Z$ with vertex set $V(Z) = V_C \square V_B \subseteq V(X_n)$, and edge set corresponding to all above fibers; that is, 
$$E(Z):= \{ (x,y)(z,w) \mid x=z \text{ and $y,w$ are endpoints of the same interval, or vice-versa} \}.$$
Thus $Z$ is isomorphic to the cartesian product of an $r$-cycle and a $k$-cycle. 
In particular, $Z$ is easily seen to contain a Hamilton cycle unless $r+k\leq 3$ (a case that we will handle separately below). 
Note that each cycle in $Z$ gives rise to a cycle in $X_n$, by replacing each edge by the corresponding \defi{interval} of $C \square B$, i.e.\ the fiber of an interval of $C$ or $B$. We call any such cycle a \defi{grid-cycle} of $X_n$. We claim that
\labtequ{toro}{Every grid-cycle cycle $K$ is  $(d,D')$-injective in $X_n$, where\\ $D':=6 ( D_G(d) +  D_H(d))$.}
Indeed, suppose two vertices $(x,y)$ and $(w,z)$ of $K$ satisfy $d_{X_n}((x,y), (w,z))\leq d$. This implies both $d_{G_n}(x,w) \leq d$ and $d_{H_n}(y,z) \leq d$. Therefore, $x,w$ lie on the same or consecutive intervals of $C$ (but cannot be the two endpoints of the same interval), and likewise $x,w$ lie on the same or consecutive intervals of $B$. It follows that $(x,y), (w,z)$ lie on intervals $I,J$ of $C \square B$ that share a vertex or coincide. Thus one of the two subpaths of $K$ between $(x,y)$ and $(w,z)$ is contained in $I\cup J$, and therefore has diameter at most $\diam(I) + \diam(J) \leq 2 (3 D_G(d) + 3 D_H(d))$. This proves \eqref{toro}.
\medskip

If $r=1$, then we can let $K_n^d:= B_n^d$, which is (still) $(d,D_H(d))$-injective, and easily seen to be $(R_H(d)+3D_G(d))$-dense in $X_n$. The case $k=1$ can be handled similarly. 

Thus we can assume that $r,k\geq 2$, and find a Hamilton cycle  $K$ in $Z$. Since each interval has diameter at most $3D_G(d)$ or  $3D_H(d)$,  
we deduce that the corresponding grid-cycle $K_n^d$ is $L$-dense in $C \square B$ for $L:=3D_G(d) + 3D_H(d)$. Since $C$ is $R_G(d)$-dense in $G_n$, and $B$ is $R_H(d)$-dense in $H_n$, it follows similarly that $C \square B$ is $(R_G(d)+R_H(d))$-dense in $X_n$. Combining these two facts yields that $K_n^d$ is $(L+R_G(d)+R_H(d))$-dense in $X_n$. Recalling \eqref{toro}, we conclude that the $K_n^d$ form a \hamfam\ of $(G_n\square H_n)$ with control functions 
$$D_X=6 D_G +  6 D_H  \quad \text{ and } \quad 
R_X= 3D_G + 3D_H + R_G+R_H.$$ 
\end{proof}

\section{Boosting expanders} \label{sec exps}

Given a graph \g and $K \in \N$, we denote with \defi{$G^K$} the $K$-th \defi{power} of $G$, i.e.\ the graph on $V(G)$ with edge-set $\{xy \mid d_G(x,y)\leq K\}$.

The aim of this section is to prove that taking powers boosts the expansion constant $c$ of a \sCe\ (\Prr{exp boost} below). Combining this with \Tr{asham qi}, and the easy fact that $G^K$ is $(M(K),A(K))$-\qic\ to \G, proves the claim made in the introduction that to prove the first sentence of \Prb{prob exp} it suffices to prove it for large enough $C$.

\begin{proposition} \label{exp boost}
\Fe\ $1<C<C'$ there exists $K=K(C,C')\in\mathbb N$ such that if $G$ is an $n$-vertex $C$-expander, then $G^K$ is a $C'$-expander.
\end{proposition}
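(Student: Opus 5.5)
The plan is to verify the two defining conditions of a $C'$-expander for $G^K$ separately, using iterated neighbourhoods in $G$. In $G^K$ the closed neighbourhood of $X$ is the ball $B_X(K)$ taken in $G$. Writing $N_{G^K}(X)=B_X(K)\sm X$, it suffices to show $|B_X(K)|\geq (C'+1)|X|$ for every $X$ with $|X|<n/2C'$. Condition \ref{ex ii} for $G^K$ should be the easy part. Since $C'>C$, two disjoint sets of size at least $n/2C'$ are smaller than the $n/2C$ threshold of condition \ref{ex ii} for $G$, so I will first blow them up. I take $K$ large and grow each set $Y$ with $|Y|\geq n/2C'$ to $B_Y(j)$ for some bounded $j$, so that $|B_Y(j)|\geq n/2C$.

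\textbf{Growth from condition \ref{ex i} of $G$.} For any $Y$, as long as $|B_Y(i)|<n/2C$, condition \ref{ex i} applied to $B_Y(i)$ gives
$$|B_Y(i+1)|\geq (1+C)|B_Y(i)|.$$
Starting from $|Y|\geq n/2C'$, we reach size $n/2C$ after at most $j_0:=\ceil{\log_{1+C}(C'/C)}+1$ steps. With this in hand, condition \ref{ex ii} for $G^K$ follows. Given disjoint $Y_1,Y_2$ of size at least $n/2C'$, there are two cases.
\begin{itemize}
\item If $d_G(Y_1,Y_2)\leq K$, there is an edge of $G^K$ between them and we are done.
\item Otherwise $B_{Y_1}(j_0)$ and $B_{Y_2}(j_0)$ are disjoint as long as $2j_0<K$. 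Each has size at least $n/2C$, so condition \ref{ex ii} of $G$ gives an edge between them. This yields $d_G(Y_1,Y_2)\leq 2j_0+1\leq K$, a contradiction.
\end{itemize}
So any $K\geq 2j_0+1$ handles condition \ref{ex ii}.

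\textbf{Condition \ref{ex i} for $G^K$.} Fix $X$ with $|X|<n/2C'<n/2C$. While $|B_X(i)|<n/2C$, the growth step multiplies the size by $1+C$. After $t$ steps we therefore have either $|B_X(t)|\geq (1+C)^t|X|$, or $|B_X(i)|\geq n/2C$ for some $i\leq t$. Choose $t$ with $(1+C)^t\geq C'+1$. In the first case we are done.

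\textbf{The main obstacle: sets that become large.} The second case is where I expect the real work: once $B_X(i)$ reaches size $n/2C$ we can no longer use condition \ref{ex i}, and we need to continue growth up to $(C'+1)|X|$. Note that $(C'+1)|X|<(C'+1)n/2C'$, which could exceed $n/2$, so we actually need the balls to cover almost everything. For this I will use condition \ref{ex ii} of $G$: if $Y:=B_X(i)$ has $|Y|\geq n/2C$, then the set $Z:=V\sm B_Y(1)$ has no edge to $Y$. Hence $|Z|<n/2C$, so $|B_Y(1)|>n-n/2C$.

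Next, condition \ref{ex i} applied to $V\sm B_Y(m)$, which is small, also shrinks the complement. If $W:=V\sm B_Y(m+1)$ and $|W|<n/2C$, then $N(W)\subseteq B_Y(m+1)\sm B_Y(m)$. I need this neighbourhood argument to be set up carefully, but the upshot should be that the complement of $B_Y(m)$ shrinks geometrically by a factor $1+C$ per step, so after $O(\log_{1+C} n)$ steps it is empty. That is unbounded in $n$, though, so instead I only need $|B_X(K)|\geq (C'+1)|X|$. Since $|X|<n/2C'$, it suffices that $|B_X(K)|\geq n(C'+1)/2C'$. This is at most $n-n/2C'\cdot(C'-1)$, i.e. bounded away from $n$ by a constant fraction, when $C'>1$. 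Reaching a $(1-\epsilon)$-fraction of $n$ takes only boundedly many further steps, namely $\log_{1+C}(1/(2C\epsilon))$, which depends on $C,C'$ only. Summing the step counts gives the required $K(C,C')$.
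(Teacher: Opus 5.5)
Your proof is correct, but it takes a different route from the paper.

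The paper isolates a one-step statement, Lemma~\ref{cube boost}: if $G$ is a $c$-expander then $G^3$ is a $c'$-expander for every $c'\in[c,c^2]$. It then iterates this (using $(G^a)^b=G^{ab}$) to get $K=3^{\ceil{\log_2\log_C C'}}$. To make the fixed power $3$ suffice, its proof of condition~\ref{ex i} argues by contradiction and bounds the leftover set $R=V\sm B_3(X)$ in terms of $|X|$ rather than $n$. The assumption $|B_3(X)|<(c'+1)|X|$ forces $|S_2(X)|<(c'-c)|X|$, while applying condition~\ref{ex i} to $R$ and then to $S_3(X)\cup R$ gives $|S_2(X)|\geq c(c+1)|R|$. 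For condition~\ref{ex ii}, one application of~\ref{ex i} to each set already reaches size $n/2c$, because $c'\leq c^2$.

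You instead let $K$ be large and run two cruder phases: forward growth by a factor $1+C$ until a ball has $n/2C$ vertices, then condition~\ref{ex ii} followed by geometric shrinking of the complement. So you only control the complement relative to $n$. This is more elementary. It also gives $K$ of order $\log_{1+C}C'$ for fixed $C$, a better bound than the paper's $3^{\ceil{\log_2\log_C C'}}\geq(\log_C C')^{\log_2 3}$ as $C'\to\infty$. However, it degenerates like $\log(1/(C'-1))$ as $C'\to 1$, whereas the paper gives $K=3$ whenever $C'\leq C^2$.

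The step you flagged does go through. For $m\geq 1$, the set $W=V\sm B_Y(m+1)$ lies in $V\sm B_Y(1)$, so it has fewer than $n/2C$ vertices. Also $W\cup N(W)\subseteq V\sm B_Y(m)$, hence $|V\sm B_Y(m+1)|\leq|V\sm B_Y(m)|/(1+C)$. The identity $B_{B_X(i)}(r)=B_X(i+r)$ then transfers this to $B_X(K)$. One small correction: your ``at most $n-\frac{n}{2C'}(C'-1)$'' is in fact an equality.
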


This follows immediately by recursively applying the following lemma (which shows that $K(C,C')= 3^{\ceil{\log_2(\log_C C')}}$ suffices): 

\begin{lemma} \label{cube boost}
Let $c>1$, and let $G$ be a $c$-expander. Then $G^3$ is a $c'$-expander \fe\ $c' 
\in [c, c^2]$.
\end{lemma}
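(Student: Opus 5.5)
The plan is to verify the two defining conditions of a $c'$-expander for $G^3$ directly, writing $B_i(X)$ for the set of vertices at $G$-distance at most $i$ from $X$. Two facts drive everything. First, $N_{G^3}(X)=B_3(X)\sm X$, so condition~\ref{ex i} for $G^3$ amounts to showing that $B_3(X)$ is large. Second, since $c'\ge c$, we have $n/2c'\le n/2c$. This means that condition~\ref{ex i} of $G$ applies to every set $X$ with $|X|<n/2c'$, but that condition~\ref{ex ii} for $G^3$ concerns smaller sets than condition~\ref{ex ii} for $G$ does.

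Condition~\ref{ex ii} comes first. Let $X,Y$ be disjoint sets with $|X|,|Y|\ge n/2c'\ge n/2c^2$; we need a pair $x\in X$, $y\in Y$ with $d_G(x,y)\le 3$. I claim that $|B_1(X)|\ge n/2c$. If $|X|\ge n/2c$ this is trivial. Otherwise expansion of $G$ gives $|B_1(X)|\ge (1+c)|X|\ge (1+c)n/2c^2\ge n/2c$, where the last step uses $(1+c)/c\ge 1$. The same bound holds for $Y$. If $B_1(X)\cap B_1(Y)\neq\emptyset$, some pair is at distance at most $2$. Otherwise $B_1(X)$ and $B_1(Y)$ are disjoint sets of size at least $n/2c$, so condition~\ref{ex ii} of $G$ gives an edge between them, and hence a pair at distance at most $3$.

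For condition~\ref{ex i}, let $|X|<n/2c'$. I would split according to the size of $B_1(X)$.
\begin{enumerate}
\item Suppose $|B_1(X)|<n/2c$. Applying expansion twice gives $|B_2(X)|\ge(1+c)^2|X|$. Hence $|N_{G^3}(X)|\ge((1+c)^2-1)|X|\ge c^2|X|\ge c'|X|$.
\item Suppose $|B_1(X)|\ge n/2c$. No vertex outside $B_2(X)$ has a neighbour in $B_1(X)$, so condition~\ref{ex ii} forces $|V\sm B_2(X)|<n/2c$. Let $Y:=V\sm B_3(X)$. Then $Y$ is small, so it expands, and since $N(Y)\subseteq B_3(X)\sm B_2(X)$ we get $(1+c)|Y|\le|V\sm B_2(X)|<n/2c$. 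This yields $|B_3(X)|\ge n\bigl(1-\tfrac{1}{2c(1+c)}\bigr)$. It remains to compare this with $(1+c')|X|<n\bigl(\tfrac12+\tfrac1{2c'}\bigr)\le n\bigl(\tfrac12+\tfrac1{2c}\bigr)$.
\end{enumerate}

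The main obstacle is this last numerical comparison in case~2. It needs $1-\frac{1}{2c(1+c)}\ge\frac12+\frac1{2c}$, which is equivalent to $c^2\ge 2$, i.e.\ $c\ge\sqrt2$. So for $1<c<\sqrt2$ the argument above falls short. There I would first try to use the slack discarded in case~2. Concretely, I would apply expansion to $X$ directly to get $|B_1(X)|\ge(1+c)|X|$ in addition to $|B_1(X)|\ge n/2c$, and I would iterate the complement-expansion step once more inside the shell $B_3(X)\sm B_1(X)$. If that still fails, I would restrict the statement to $c'$ slightly smaller than $c^2$ when $c$ is close to $1$. The recursion in \Prr{exp boost} only needs some fixed multiplicative gain per cubing, so $K(C,C')$ would change only by a constant factor in its exponent.
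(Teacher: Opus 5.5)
\textbf{There is a genuine gap in case~2 of condition~(i).} Your verification of condition~(ii) and of case~1 of condition~(i) is correct and is essentially the paper's argument; using $B_1(X)$ where the paper uses $N(X_i)$ changes nothing.

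The gap in case~2 is not just a matter of constants. Comparing $|B_3(X)|\ge n\bigl(1-\frac{1}{2c(1+c)}\bigr)$ with $(1+c')|X|<n\bigl(\frac12+\frac{1}{2c'}\bigr)$ succeeds only when $c'\ge \frac{c(c+1)}{c^2+c-1}$. This threshold exceeds $c$ whenever $c<\sqrt2$. For $c$ near $1$ it even exceeds $c^2$: for $c=1.1$ it is about $1.76>1.21$. So in that regime your case~2 covers no $c'\in[c,c^2]$ at all. Your second fallback, shrinking $c'$, therefore goes in the wrong direction, because the obstruction is a lower bound on $c'$. And since \Prr{exp boost} is stated for arbitrary $C>1$, the regime where $c$ is close to $1$ cannot be avoided.

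\textbf{The missing idea.} Compare the part of $V$ outside $B_3(X)$ with $|X|$ instead of with $n$, using the shell $S_2(X)$ of vertices at distance exactly $2$ from $X$ as a bottleneck. Your first fallback gestures at this, but the ingredient that makes it work is an upper bound on this shell relative to $|X|$.

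First, since $|B_1(X)\setminus X|\ge c|X|$ already, you are done unless $|S_2(X)|<(c'-c)|X|\le c(c-1)|X|$.

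Second, push your complement argument one step further. Set $Y:=V\setminus B_3(X)$ and $Z:=V\setminus B_2(X)$. You already have $|Z|<n/2c$ and $|Z|\ge(1+c)|Y|$. Since $N(Z)\subseteq S_2(X)$, expanding $Z$ gives $|S_2(X)|\ge c|Z|\ge c(c+1)|Y|$.

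Combining this with the upper bound on $|S_2(X)|$ yields $|Y|<\frac{c-1}{c+1}|X|$. Using $n>2c'|X|$, you get $|B_3(X)|=n-|Y|>\bigl(2c'-\frac{c-1}{c+1}\bigr)|X|\ge(1+c')|X|$, because $c'-1\ge c-1\ge\frac{c-1}{c+1}$.

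This is precisely the paper's argument, phrased there as a contradiction with the assumption $|B_3(X)|<(c'+1)|X|$. It works for every $c>1$ and every $c'\in[c,c^2]$.
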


\begin{proof}
For any $Y \subseteq V$ and \nin, let $S_n(Y):= \{v\in V \mid d_G(v,Y) = n\}$, and recall $B_n(Y):= \{v\in V \mid d_G(v,Y) \leq n\}$.

To prove that $G^3$ satisfies  \ref{ex i} with $C=c'$, it suffices to check that $|B_3(X) \sm X| \geq c'|X|$ for every $X \subseteq V$ with $0<|X| <  n/2c'$. Suppose this is not the case for some such $X$, i.e.\ letting $x:=|X|$ we have 
\labtequ{contr}{$|B_3(X)| < (c'+1)x$.}

We have $|S_1(X)|\geq c x$ by applying \ref{ex i} to \G. Combining this with \eqref{contr} yields
\labtequ{Stwo}{$|S_2(X)|<(c'-c)x$}
because $X \dot{\cup} S_1(X) \dot{\cup}S_2(X) \subseteq B_3(X)$. 

If $|B_1(X)| < n/2c$, then applying \ref{ex i} again with $X$ replaced by $B_1(X)$ we obtain $|S_2(X)| \geq c^2 x\geq c' x$, contradicting \eqref{Stwo}. 

Thus we can assume $|B_1(X)| \geq t:= n/2c$ from now on. Let $R:= V \sm B_3(X)$. Note that $R$ cannot be empty, because if it is, then we have $|B_3(X)| = |V|=n\geq 2c' x \geq c'x + x$, violating \eqref{contr}. 


Since there is no edge between $B_1(X)$ and $S_3\cup R$ (\fig{}),  \ref{ex ii} combined with $|B_1(X)| \geq t$ implies that $|S_3\cup R| <t$, and hence also $r:= |R| <t$. Thus applying \ref{ex i} to $R$ we have $|N(R)|\geq c r$. Since $N(R) \subseteq S_3$, we deduce that $|S_3\cup R| \geq (c+1) r$. 

Similarly, we can apply \ref{ex i} again to $S_3\cup R$ to obtain 
\labtequ{S low}{$|S_2|\geq c(c+1) r$,}
whereby we used that $|S_3\cup R| <t$, and that $N(S_3\cup R) \subseteq S_2$. On the other hand, combining \eqref{Stwo} with $c'\leq c^2$ yields $|S_2(X)|< c(c-1)x$, and combining this with \eqref{S low} we obtain
$$r< \frac{c-1}{c+1} x.$$
Since $B_3 \cup R=V$, it follows that
$|B_3| = n - r > 2c'x - \frac{c-1}{c+1} x$, which combined with \eqref{contr} yields
$$2c' - \frac{c-1}{c+1}  < c'+1.$$
Subtracting $c'$, and multiplying both sides with $c+1$ yields $c'(c+1) - c+1 <c+1$, and therefore $(c+1)c'<2c$, contradicting $c'\geq c\geq 1$. This contradiction proves \ref{ex i}.
\medskip

To prove \ref{ex ii}, pick disjoint sets $X_1,X_2 \subseteq V$, each with at least $n/2c'$ vertices. If each $X_i$ has at least $n/2c$ vertices we are done by applying \ref{ex ii} to $G$ with $C=c$, so let us define
\begin{equation}
 X'_i:=
 \begin{cases}
  X_i, & \text{if } |X_i|\geq n/2c,\\[2mm]
  N(X_i), & \text{if } |X_i|<n/2c.
 \end{cases}
\end{equation}

Note that $|X_i| < n/2c$ implies $|N(X_i)|\geq c |X_i|$ by applying \ref{ex i} to \G, and plugging in $|X_i| \geq n/2c'$ we deduce that $|X'_i|  \geq c n/2c' \geq c n/2c^2 = n/2c$ holds in both cases. 

If $X'_1,X'_2$ are disjoint, then applying \ref{ex ii} to \G\ we obtain an edge $e=x_1 x_2$ joining them, where each $x_i$ lies in $X'_i\subset X_i \cup N(X_i)$. It follows that $d_G(X_1,X_2)\leq 3$. If on the other hand $X'_1$ and $X'_2$ intersect, then easily $d_G(X_1,X_2)\leq 2$. Thus in both cases  $G^3$ contains an $X_1$--$X_2$~edge as desired. \end{proof}


\comment{
\subsection{purely graph theoretic problems}

For a function $f: \mathbb{Z}_{+} \to \mathbb{R}$, we say that a graph $G$ is \defi{$f$-connected} if
$|A \cap B| \geq f\left(\min\left\{|A\setminus B|,\;|B\setminus A|\right\}\right)$
for every two subsets $A,B \subseteq V(G)$ such that $V(G)=A\cup B$ and there is no edge between $A\setminus B$ and $B\setminus A$.

Brandt, Broersma, Diestel, and Kriesell \cite{BBDKGlo} conjectured  that there exists a linear function $f$ \st\ every $f$-connected graph with at least 3 vertices is Hamiltonian. This was proved by Dragani\'c, Montgomery, Munh\'a Correia, Pokrovskiy \& Sudakov \cite{DMMPSHam}.

Define $g: \R_+ \to \R_+$ by letting $g(R)$ be the infimal constant $B$ \st\ letting $f(k):= Bk$,  every $f$-connected graph on at least 3 vertices has an $R$-dense cycle. The aforementioned result of Dragani\'c et al.\ says that $g(R)$ is well-defined as $g(0)<\infty$. 
\begin{problem}  \label{prob gR}
Is $\lim_{R \to \infty} g(R) =0$?
\end{problem} 
}

\subsection*{Use of AI}

A mid-range AI agent made available by the University of Warwick, from now on abbreviated as CheapGPT,  was used to prove \Lr{cube boost}, to streamline the proofs of \Lr{lem fS D} and \Prr{kinfty}, and for other minor tasks. The main concepts, statements, and proofs of the paper are due to mankind, and I have thoroughly checked, and mostly re-written, any passages provided by CheapGPT. 

\acknowledgement{I thank Chiara Molinari for helpful discussions about the notion of \asc\ connectivity, in particular, for a proof of \Prr{aslykcon}. }

\bibliographystyle{plain}
\bibliography{collective}

\end{document}